 \newtheorem{theorem}{Theorem}[section]
 \newtheorem{corollary}[theorem]{Corollary}
 \newtheorem{lemma}[theorem]{Lemma}
\def\vc#1{{\bar #1}}
\begin{document}
\title{On $q$-ary shortened-$1$-perfect-like codes%
\thanks{This manuscript is the accepted version
of the paper published in IEEE Trans. Inf. Theory 68(11) 2022, 7100--7106, 
\url{https://doi.org/10.1109/TIT.2022.3187004} \copyright\,IEEE\,2002.}
\thanks{The work of M.\,J.\,Shi is supported by the National Natural Science
Foundation of China (61672036); the work
of R.\,S.\,Wu is supported by China Postdoctoral Science Foundation funded
project under Grant No. 2021M703098; the work of
D.\,S.\,Krotov is supported with the state contract of the
Sobolev Institute of Mathematics (FWNF-2022-0017).
}
} 

\author{%
\href{https://orcid.org/0000-0002-4990-6271}{Minjia Shi}%
\thanks{%
     M. J. Shi is with Key Laboratory of Intelligent Computing and Signal
Processing of Ministry of Education, School of Mathematics Sciences, Anhui
University, Hefei 230601, Anhui, China.
     \protect\\
     E-mail: \href{mailto:smjwcl.good@163.com}{smjwcl.good@163.com}%
}%
, \href{https://orcid.org/0000-0002-2361-5126}{Rongsheng~Wu}%
\thanks{%
     R. S. Wu is with School of Mathematical Sciences, University of Science
and Technology of China, Hefei 230026, Anhui, China.
     \protect\\
     E-mail: \href{mailto:wrs2510@163.com}{wrs2510@ustc.edu.cn}%
}%
, and \href{https://orcid.org/0000-0002-8516-755X}{Denis~S.~Krotov}
\thanks{%
     D. S. Krotov is with Sobolev Institute of Mathematics,
     pr. Akademika Koptyuga 4,
     Novosibirsk 630090, Russia.
     \protect\\
     E-mail: \href{mailto:krotov@math.nsc.ru}{krotov@math.nsc.ru}%
}
} 

\markboth{IEEE Transactions on Information Theory, 2022}%
{Sh, Wu, and Krotov: On $q$-ary shortened-$1$-perfect-like codes}

\date{}
\maketitle

\begin{abstract}
\boldmath
We study codes with parameters of $q$-ary shortened Hamming codes, i.e., $(n=(q^m-q)/(q-1), q^{n-m}, 3)_q$. Firstly, we prove the fact mentioned in 1998 by Brouwer et al. that such codes are optimal, generalizing it to a bound for multifold packings of radius-$1$ balls, with a corollary for multiple coverings. In particular, we show that the punctured Hamming code is an optimal $q$-fold packing with minimum distance $2$. Secondly, for every admissible length starting from $n=20$, we show the existence of $4$-ary codes with parameters of shortened $1$-perfect codes that cannot be obtained by shortening a $1$-perfect code.
\end{abstract}

\begin{IEEEkeywords}
Hamming graph, multifold packings, multiple coverings, perfect codes.
\end{IEEEkeywords}

{\bf 2010 Mathematics Subject Classification:} 94B05; 94B65.


\section{Introduction}

In this paper, we study unrestricted (in general, nonlinear) codes with parameters of a shortened
Hamming ($1$-perfect) code over a non-binary finite field. Two questions are considered.
Firstly, we provide a generalized proof that these codes are optimal,
which is applicable to more general classes of objects such as multifold packings
and multiple coverings.
The second question is the following: is every code with parameters of a shortened
$1$-perfect code indeed a shortened $1$-perfect code? The answer is known to be positive
in the binary case~\cite{Bla99}.
We show that this is not the case
for quaternary codes
and discuss
some other alphabet sizes.
In preceding works, the questions under study were well
developed for binary codes, and the next two paragraphs present a brief survey on the topic.

By explicit evaluation of the linear-programming bound, Best and Brouwer~\cite{BesBro77}
proved that shortened, doubly-shortened, and triply-shortened
binary Hamming codes are optimal.
Brouwer at al.
claimed in~\cite[Proposition~4.5]{BHOS:1998:2ary3ary}
that the same is true for non-binary shortened Hamming codes.
Etzion and Vardy~\cite{EV:98} asked the following question: given a binary code $C$ of
length $n=2^m-2$ with $|C| = 2^{n-m}$ and minimum Hamming distance $3$, is it always possible to
represent $C$ as a shortened $1$-perfect code?
The same question was asked for shortening more than once.
Blackmore~\cite{Bla99} solved the first half of this problem,
showing that every code with parameters of the binary shortened Hamming code
is indeed a shortened $1$-perfect code.
\"Osterg{\aa}rd and Pottonen~\cite{OstPot:13-512-3} found two
$(13,512,3)$ binary codes which are not doubly-shortened $1$-perfect codes.
A generalization on these codes to an arbitrary length of form $2^m-3$
(and also $2^m-4$ for the triply-shortened analog), $m\ge 5$,
was constructed in~\cite{KOP:2011} (where
also all $(12,256,3)$ and $(13,512,3)$
codes were classified up to equivalence).
However, it was shown in~\cite{Kro:2m-3} and~\cite{Kro:2m-4}
that every code with parameters of doubly- or
triply-shortened binary Hamming code induces a very regular structure
called equitable partition (in the doubly-shortened case,
the code is a cell of such partition,
while in the triply-shortened extending the code results in a cell of such partition).
Theoretical and computational results in
\cite{KKO:smallMDS}
confirm that for $q\le 8$, every $(q,q^{q-2},3)_q$ code is
a shortened $1$-perfect
MDS $(q+1,q^{q-1},3)_q$ code (see also the discussion in the end of Section~\ref{s:5555}).

Generalizing Best and Brouwer's bound~\cite{BesBro77},
Krotov and Potapov~\cite[Theorem~4]{KroPot:multifold}
extended it to the case of multifold
packing of radius-$1$ balls in binary Hamming spaces.
Multifold, or $\lambda$-fold, packings
are a combinatorial treatment of the codes called list-decodable
(see e.g. the surveys in~\cite{Guruswami:05} and~\cite{Resch:20}), the case $\lambda=1$
corresponding to ordinary error-correcting codes. As a corollary, the upper bounds
on the size of multifold packing of radius-$1$ balls obtained in~\cite{KroPot:multifold}
were treated there (Theorem~6) as lower bounds on the size of multiple $1$-coverings, which can be considered as the complements of multifold packings.

The purpose of this paper is two-fold:
one is to derive an upper bound and a lower bound
on the size of $q$-ary $\lambda$-fold
$1$-packing and multiple radius-$1$ covering in $H(n,q)$ with $n\equiv q \bmod q^2$, respectively;
and the other is to characterize codes with parameters of $q$-ary shortened Hamming codes.

The structure of the paper is as follows.
Section~\ref{s:def} contains definitions and notations.
In Section~\ref{s:multi}, we prove
the first main result, which
partially generalizes the bound~\cite[Theorem~4]{KroPot:multifold}
to the non-binary case, as well as it generalizes the optimality
of the $1$-shortened Hamming $q$-ary code~\cite[Proposition~4.5]{BHOS:1998:2ary3ary}
to an upper bound for the size of a multifold packing.
Here, ``partially'' means that we are only able to generalize case~(c)
in~\cite[Theorem~4]{KroPot:multifold},
corresponding to lengths $n$ congruent to $q$ modulo $q^2$
(apart from case~(d), which is just a sphere-packing bound);
in particular, it remains unknown for $q>2$
if all $q$-ary doubly (or-more-times) shortened Hamming codes are optimal or not.
We also treat the result as a lower bound on the size of a multiple covering (Section~\ref{s:cov}).
Section~\ref{s:main2} contains the second main result of the paper, Theorem~\ref{th:4}, where
we show the existence of $4$-ary codes with parameters
$(n=(4^m-4)/3, 4^{n-m}, 3)_4$ (of a shortened $1$-perfect code)
that cannot be obtained by shortening a $1$-perfect code;
the similar problem for
alphabets of other sizes
is briefly discussed in Section~\ref{s:5555}. Section~\ref{s:end} contains concluding remarks.

\section{Preliminaries}\label{s:def}

The \emph{Hamming graph} $H(n,q)$
is a graph on the set of $n$-words over the alphabet
$\{0,1,\ldots,q-1\}$, two words being adjacent if they differ
in exactly one position.
The minimum-path distance in $H(n,q)$ is called the \emph{Hamming
distance} and denoted by
$d(\cdot,\cdot)$.

A $q$-ary \emph{code} of length $n$ is an arbitrary nonempty
set of vertices of $H(n,q)$.
In particular, a $q$-ary code of length $n$,
size $M$ and minimum distance $d$ between codewords
is referred to as
an $(n,M,d)_q$-code.
The numbers $q$, $n$, $M$, and $d$ are called the
\emph{parameters} of the code.

For any two words or symbols $\vc{x}$ and $\vc{y}$,
by $\vc{x}\vc{y}$ we denote their concatenation.
For a code $C$ and a symbol or word $\vc{x}$, we denote
$C\vc{x}:=\{\vc{c}\vc{x}:\ \vc{c}\in C\}$
and
$\vc{x}C:=\{\vc{x}\vc{c}:\ \vc{c}\in C\}$;
similarly,
$CD:=\{\vc{x}\vc{y}:\ \vc{x}\in C,\ \vc{y}\in D\}$
for two codes $C$ and $D$.

 For a code $C$ in $H(n,q)$,
 denote by $C^{(i)}$ the set of vertices
 at distance $i$ from $C$.
 A code $C$ is called \emph{completely regular}
 if for any $i$ and $j$ and
 the number of neighbors in $C^{(j)}$ is the same
 for all words of $C^{(i)}$ (see~\cite{BRZ:CR} for a recent survey).
 The last concept does not play
 a role in the present research,
 but we find important to note
 (see Corollary~\ref{c:cr}) that the codes we consider
 are possessed of such exceptional regularity properties.

Let $C$ be an $(n,M,d)_q$-code. Assume that $j\in \{1,2,\ldots,n\}$ and let $C(j)$ be the code obtained from $C$ by selecting only these words of $C$ having $\alpha\in \{0,1,\ldots,q-1\}$ in position $j$. Puncturing the set $C(j)$ on $j$ gives a code denoted $C^-$.
Then we say that $C^-$ is a \emph{shortened} (with $\alpha$ in position $j$) $C$
and that $C$ is a \emph{lengthened} $C^-$.
It is easy to see that the minimum distance of the code $C^-$ is at least as great as that of $C$.

A \emph{$1$-perfect code} in $H(n,q)$ (or any other graph) is a set $C$ of vertices such that every vertex of $H(n,q)$ is at distance $0$ or $1$ from exactly one element of $C$.
In this paper, we focus on the codes with parameters
of shortened $1$-perfect codes, called
\emph{shortened-$1$-perfect-like}
(as is stated in one of our main results not all such codes
are indeed shortened $1$-perfect).
If $q$ is a prime power, then $1$-perfect codes exist if and only if
$n=(q^m-1)/(q-1)$, $m\in\{1,2,\ldots\}$;
in particular, there is a linear such code
known as the \emph{Hamming code}
(\emph{linear} means that the vertex set is endowed with the structure of a vector space over the finite field of order $q$ and the code is a vector subspace).
If $q$ is not a prime power, no $1$-perfect codes are known;
however, most of our results, unless otherwise is stated, have no restrictions on $q$.

A \emph{$\lambda$-fold $r$-packing} in $H(n,q)$ is a multiset $C$ of vertices such that the radius-$r$ balls centered in the words of $C$ cover each vertex of $H(n,q)$ by not more than $\lambda$ times.
A set $C$ of vertices in $H(n,q)$ is called a $q$-ary $(n,\cdot ,r,\mu)$ multiple covering if for every vertex $\vc{x}$ in $H(n,q)$ the number of elements of $C$ at distance at most $r$ from $\vc{x}$
is not less than $\mu$.

\section{Bounds for multifold packings and multiple coverings}\label{s:multi}

\subsection{Multifold packings}\label{s:pack}
Before we state the main results of this section
(Theorems~\ref{th:ub} and~\ref{th:dist2}),
we define additional concepts and notations, used in the proofs.

Let $C$ be a code in $H(n,q)$.
The sequence $(A_i)_{i=0}^n$, where
\begin{IEEEeqnarray*}{rCl}
A_i&:=&\frac{1}{|C|}\sum_{\vc{x}\in C}A_i(\vc{x}),
\\
A_i(\vc{x})&:=&|\{\vc{y}\in C:\ d(\vc{x},\vc{y})=i\}|,
\end{IEEEeqnarray*}
is called the
\emph{distance distribution} of $C$
(while $(A_i(\vc{x}))_{i=0}^n$
is the \emph{weight distribution} of the code $C-\vc{x}$).

Let $n$ be a positive integer and $i$ an indeterminate.
The Krawtchouk polynomial (see, e.g., \cite[Ch.\,5 \S7]{MWS})
$K_k(i)$ is defined to be
$$K_k(i):=\sum_{j=0}^k(-1)^j(q-1)^{k-j}\binom{i}{j}\binom{n-i}{k-j}, \ k=0,1,2, \ldots.$$
It is worth noting that the Krawtchouk polynomial plays a pivotal role in this section.
Based on that, we define the dual sequence $\{B_k\}_{k=0}^n$ by
$$B_k:=\frac{1}{|C|}\sum_{i=0}^nA_iK_k(i).$$
In particular, we have (\cite[Theorem~5.20?]{MWS})
$$
q^nA_k=|C|\sum_{i=0}^nB_iK_k(i), \ \ \ \ \ k\in \{0,1,\ldots ,n\}.
$$
We give a detailed calculation for $K_0(i)$, $K_1(i)$, and $K_2(i)$ as follows:
\begin{IEEEeqnarray*}{rCl}
  K_0(i) &=& 1;\\
  K_1(i) &=& n(q-1)-qi;\\
  K_2(i) &=& \frac{1}{2}\big(n(q-1)-qi\big)^2-\frac{n(q-1)^2}{2}+\frac{q(q-2)}{2}i.
\end{IEEEeqnarray*}

\begin{lemma}\label{l:A012}
For a $\lambda$-fold $1$-packing $C$ in $H(n,q)$, $q>2$, with distance distribution $(A_0,A_1, \ldots, A_n)$, it holds
\begin{equation}\label{eq:odd}
 n(q-1)A_0 + 2(q-1)A_1+2A_2
\le
(n+1)(q-1)\lambda - q+1.
\end{equation}
Moreover, if $q$, $n$, and $\lambda$ are even, then
\begin{equation}\label{eq:even}
n(q-1)A_0 + 2(q-1)A_1+2A_2
\le
(n+1)(q-1)\lambda-q.
\end{equation}
In particular, if one of the relations above holds with equality,
then we have $A_0=1$ (that is, there are no codewords of $C$ with the multiplicity more than one)
and $A_1=\lambda-1$.
\end{lemma}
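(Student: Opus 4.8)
The plan is to sidestep the linear-programming machinery and derive \eqref{eq:odd} from two elementary counting estimates, each a direct consequence of the $\lambda$-fold packing property; the stronger bound \eqref{eq:even} will then demand an extra parity refinement, which I expect to be the real work. First I would record a \emph{local} estimate. For a vertex $\vc x$ of $H(n,q)$ write $f(\vc x)$ for the number of codewords of $C$ (counted with multiplicity) lying in the radius-$1$ ball $B(\vc x)$ around $\vc x$; the packing hypothesis is exactly $f(\vc x)\le\lambda$ for every $\vc x$. Evaluating this at a codeword $\vc c$ gives $A_0(\vc c)+A_1(\vc c)=f(\vc c)\le\lambda$, and averaging over $\vc c\in C$ yields
\begin{equation*}
A_0+A_1\le\lambda. \tag{i}
\end{equation*}

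Next I would record a \emph{second-moment} estimate. Since $0\le f(\vc x)\le\lambda$ we have $f(\vc x)^2\le\lambda f(\vc x)$ for every $\vc x$; summing over all vertices gives $\sum_{\vc x}f(\vc x)^2\le\lambda\sum_{\vc x}f(\vc x)=\lambda|C|(1+n(q-1))$, the last equality because each of the $|C|$ balls has $1+n(q-1)$ vertices. On the other hand $\sum_{\vc x}f(\vc x)^2=\sum_{\vc c,\vc c'\in C}|B(\vc c)\cap B(\vc c')|$, and a short computation of the ball-intersection numbers shows that $|B(\vc c)\cap B(\vc c')|$ equals $1+n(q-1)$, $q$, $2$, or $0$ according as $d(\vc c,\vc c')$ is $0$, $1$, $2$, or at least $3$. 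Hence the left side equals $|C|\big((1+n(q-1))A_0+qA_1+2A_2\big)$, and after cancelling $|C|$ I obtain
\begin{equation*}
(1+n(q-1))A_0+qA_1+2A_2\le\lambda(1+n(q-1)). \tag{ii}
\end{equation*}

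The two estimates combine cleanly. Adding (ii) to $(q-2)$ times (i) (here the hypothesis $q>2$ is used, so that the multiplier is positive), and collecting the coefficient $1+n(q-1)+(q-2)=(n+1)(q-1)$ of $A_0$, produces
\begin{equation*}
(n+1)(q-1)A_0+2(q-1)A_1+2A_2\le(n+1)(q-1)\lambda. \tag{iii}
\end{equation*}
Subtracting $(q-1)A_0$ from the left side of (iii) turns it into the left side of \eqref{eq:odd}, while on the right side I bound $(q-1)A_0\ge q-1$ using $A_0\ge1$ (the average multiplicity of a codeword is at least one). This gives \eqref{eq:odd} with its exact constant $-(q-1)=-q+1$. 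The equality analysis then reads off immediately: equality in \eqref{eq:odd} forces $(q-1)A_0=q-1$, hence $A_0=1$ (so $C$ has no repeated codeword), together with equality throughout (iii); since (iii) is a positive combination of (i) and (ii), equality propagates to (i), giving $A_0+A_1=\lambda$ and therefore $A_1=\lambda-1$.

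The hard part is the even case \eqref{eq:even}, where one must gain a single further unit over \eqref{eq:odd}. Since the bound is stated in the normalized quantities $A_i$, this is not merely a matter of ruling out equality: the improvement is a full unit and so must come from sharpening one of the base estimates (ii) by a normalized amount. I expect this to require a parity argument on the coverage numbers $f(\vc x)$. The extremal configuration behind equality in \eqref{eq:odd}—every vertex covered $0$ or exactly $\lambda$ times, with the distance-$1$ graph on $C$ being $(\lambda-1)$-regular—should be excludable when $q$, $n$, $\lambda$ are all even, using that $\sum_{\vc x}f(\vc x)=|C|(1+n(q-1))$ and that the evenness of the parameters fixes the relevant parities of $\sum_{\vc x}f(\vc x)^2$ and of the per-codeword degrees. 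Converting this exclusion into the clean one-unit gain, rather than into a mere strict inequality, is the main obstacle, and is the step I would spend the most care on.
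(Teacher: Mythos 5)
Your derivation of \eqref{eq:odd} is correct: the ball-intersection numbers $1+n(q-1)$, $q$, $2$, $0$ are right, the combination (ii) $+\,(q-2)\cdot$(i) produces exactly the coefficients $(n+1)(q-1)$, $2(q-1)$, $2$, and the equality analysis ($A_0=1$, $A_1=\lambda-1$) is sound because $q-2>0$ makes (iii) a positive combination of (i) and (ii). This is close in spirit to the paper's argument, but the paper works \emph{per codeword} rather than globally: for fixed $\vc{x}\in C$ it bounds the coverage of the sphere $S=\{\vc{y}:d(\vc{x},\vc{y})=1\}$, getting $n(q-1)A_0(\vc{x})+(q-1)A_1(\vc{x})+2A_2(\vc{x})\le n(q-1)\lambda$, adds the ball estimate $(q-1)(A_0(\vc{x})+A_1(\vc{x}))\le(q-1)\lambda$, and only at the very end averages over $\vc{x}\in C$.

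The even case \eqref{eq:even}, which you leave open, is a genuine gap, and the pointwise structure just described is precisely the missing idea. Your worry that ``the improvement is a full unit and so must come from sharpening one of the base estimates'' is an artifact of working with the averaged quantities $A_i$, which are rationals; once the argument is localized, ruling out equality \emph{is} the full unit gain, because everything in sight is an integer. Both of your estimates do localize: (i) is already local, and the local form of (ii) is $\sum_{\vc{x}\in B(\vc{c})}f(\vc{x})\le\lambda\bigl(1+n(q-1)\bigr)$, whose left side equals $(1+n(q-1))A_0(\vc{c})+qA_1(\vc{c})+2A_2(\vc{c})$. Running your combination pointwise gives, for every codeword $\vc{c}$,
$$
n(q-1)A_0(\vc{c})+2(q-1)A_1(\vc{c})+2A_2(\vc{c})\le(n+1)(q-1)\lambda-(q-1),
$$
an inequality between integers. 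If $q$, $n$, $\lambda$ are all even, the left side is even (each term carries an even factor $n$, $2$, $2$), while the right side is odd ($(n+1)(q-1)\lambda$ is even since $\lambda$ is even, and $q-1$ is odd since $q$ is even); an even integer bounded by an odd integer is bounded by that integer minus one, which is exactly the one-unit gain, and averaging over $\vc{c}\in C$ yields \eqref{eq:even}. The same localization also repairs the equality claim for \eqref{eq:even}: if some codeword has $A_0(\vc{c})>1$ or $A_1(\vc{c})<\lambda-1$, its pointwise bound drops by at least $q-1\ge2$ below the right side of \eqref{eq:odd}, hence strictly below the right side of \eqref{eq:even}, so the average cannot attain the bound. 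This pointwise-then-average scheme is exactly how the paper proves the lemma.
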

\begin{proof}
By the definition of $A_i$, it is sufficient to prove the corresponding
inequalities for $A_i(\vc{x})$, for any $\vc{x}$ from $C$.
Suppose $\vc{x}\in C$ and denote
$$S:=\{\vc{y}\in H(n,q):\ d(\vc{x},\vc{y})=1\}.$$
The codeword $\vc{x}$ is neighbor to $n(q-1)$ words $\vc{y}$ of $S$;
every codeword of $S$ is within radius $1$ from exactly $q-1$ words $\vc{y}$ of $S$;
every codeword at distance $2$ from $\vc{x}$
is neighbor to exactly $2$ words $\vc{y}$ of $S$.
Since, by the definition of a $\lambda$-fold $1$-packing, each of
$n(q-1)$ elements of $S$ are within radius $1$ from at most $\lambda$
codewords, we get
\begin{multline*}
  n(q-1)A_0(\vc{x})+(q-1)A_1(\vc{x})+2A_2(\vc{x}) \\ = \sum_{\vc{y}\in S}\left|\{\vc{c}\in C:\ d(\vc{c},\vc{y})\le 1\}\right|
   \le n(q-1)\lambda.
\end{multline*}

Next, for $\vc{x}\in C$ we see that
\begin{multline*}
(q-1)(A_0(\vc{x})+A_1(\vc{x}))
\\ =(q-1)\cdot |\{\vc{c}\in C:\ d(\vc{c},\vc{x})\le 1   \}|
\le (q-1)\lambda.
\end{multline*}
We conclude that
\begin{multline}\label{eq:chain}
n(q-1)A_0(\vc{x})+2(q-1)A_1(\vc{x})+2A_2(\vc{x})\\
 =
\big(n(q-1)A_0(\vc{x})+(q-1)A_1(\vc{x})+2A_2(\vc{x})\big)\\
+(q-1)(A_0(\vc{x})+A_1(\vc{x}))-(q-1)A_0(\vc{x}) \\
  \le
n(q-1)\lambda + (q-1)\lambda - (q-1)A_0(\vc{x})
\\
\le
(n+1)(q-1)\lambda-(q-1),
\end{multline}
where the  last inequality comes from $A_0(\vc{x})\geq 1$.
Moreover, if $q$, $n$, and $\lambda$ are even,
then the left part of~\eqref{eq:chain} is even, while the right is
odd; so, the equality is impossible in this case and we have \eqref{eq:even}.

It remains to note that if $A_0(\vc{x})>1$ or $A_1(\vc{x}) < \lambda-1$ for $\vc{x}\in C$,
then the right part of~\eqref{eq:chain} decreases by at least $q-1\ge 2$,
which makes impossible the equality in~\eqref{eq:even} or~\eqref{eq:odd}.
\end{proof}

\begin{corollary}
The statement of Lemma~\ref{l:A012} remains true if
the code contains the all-zero word and the distance distribution is replaced by
the weight distribution in the inequality.
\end{corollary}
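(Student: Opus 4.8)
The plan is to observe that the proof of Lemma~\ref{l:A012} in fact establishes the desired estimates \emph{before} any averaging takes place: the chain~\eqref{eq:chain} is carried out for a single, arbitrary fixed codeword $\vc{x}\in C$ and yields
\[
n(q-1)A_0(\vc{x})+2(q-1)A_1(\vc{x})+2A_2(\vc{x}) \le (n+1)(q-1)\lambda-(q-1)
\]
for every $\vc{x}\in C$. The lemma is then deduced by averaging these per-codeword inequalities over $\vc{x}\in C$, all of which share the same right-hand side. Consequently, nothing is lost by fixing one convenient codeword rather than averaging, and the whole content of Lemma~\ref{l:A012} is already present at the level of the quantities $A_i(\vc{x})$.

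The key step is to specialize to $\vc{x}=\vc{0}$, which is permitted precisely because we now assume $\vc{0}\in C$. For this choice, $A_i(\vc{0})=|\{\vc{y}\in C:d(\vc{0},\vc{y})=i\}|$ counts the codewords of Hamming weight $i$, so $(A_i(\vc{0}))_{i=0}^n$ is exactly the weight distribution of $C$ (that is, the weight distribution of $C-\vc{0}=C$). Substituting $\vc{x}=\vc{0}$ into the per-codeword inequality above therefore gives the bound~\eqref{eq:odd} with the weight distribution written in place of the distance distribution, which is the first assertion of the corollary.

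For the even refinement~\eqref{eq:even}, the parity argument from the proof of Lemma~\ref{l:A012} applies verbatim at $\vc{x}=\vc{0}$: when $q$, $n$, and $\lambda$ are even, the left-hand side $n(q-1)A_0(\vc{0})+2(q-1)A_1(\vc{0})+2A_2(\vc{0})$ is even while the right-hand side $(n+1)(q-1)\lambda-(q-1)$ is odd, so equality cannot occur and the bound tightens by one. The equality characterization transfers in the same manner: if equality holds at $\vc{x}=\vc{0}$ yet $A_0(\vc{0})>1$ or $A_1(\vc{0})<\lambda-1$, then the right-hand side of the chain at $\vc{x}=\vc{0}$ drops by at least $q-1\ge 2$, a contradiction; hence $A_0(\vc{0})=1$ and $A_1(\vc{0})=\lambda-1$.

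I do not foresee a genuine obstacle here, since the corollary amounts to reading off the $\vc{x}=\vc{0}$ instance of an estimate already proved pointwise. The only point deserving care is to confirm, by inspection of~\eqref{eq:chain}, that the derivation is indeed performed codeword-by-codeword, so that no averaging is secretly required to reach the stated inequalities.
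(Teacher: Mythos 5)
Your proof is correct and matches the paper's intent exactly: the paper states this corollary without proof precisely because, as you observe, the proof of Lemma~\ref{l:A012} establishes the inequalities pointwise for each $\vc{x}\in C$ before averaging, and the corollary is just the $\vc{x}=\vc{0}$ instance (with $(A_i(\vc{0}))_{i=0}^n$ being the weight distribution when $\vc{0}\in C$). The parity refinement and the equality characterization transfer verbatim, as you note.
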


\begin{theorem}\label{th:ub}
If $q>2$ and $n\equiv q \bmod q^2$, then every $q$-ary $\lambda$-fold $1$-packing $C$ of length $n$ satisfies
$$|C| \le \frac{q^n\big((n+1)\lambda-1\big)}{n^2(q-1)+nq}.$$
Moreover, if $q$, $n$, and $\lambda$ are even, then a slightly improved bound gives
$$|C| \le \frac{q^n \big((n+1)(q-1)\lambda-q\big)}{n(q-1)\big(n(q-1)+q\big)}.$$
In each cases, if the equality holds in the bound,
then  $C$ has no multiple codewords.
\end{theorem}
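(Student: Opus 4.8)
The plan is to run Delsarte's linear‑programming method in the MacWilliams‑transform (``dual'') domain and to feed the local information of Lemma~\ref{l:A012} into it. Let $A_0\ge 1$ (the code may carry repeated codewords) and recall the inversion formulas $q^nA_j=|C|\sum_{k=0}^n B_kK_k(j)$ together with the nonnegativity $B_k\ge 0$ and the normalization $B_0=1$. Taking $j=0$ isolates the size, $q^nA_0=|C|\sum_k B_kK_k(0)$, so that an upper bound on $|C|$ is the same thing as a lower bound on $\sum_k B_kK_k(0)$; this is exactly where the factor $q^n$ enters. I would also use the reciprocity $K_k(j)/K_k(0)=K_j(k)/K_j(0)$, which lets one rewrite the relevant combinations of the $K_k(j)$ (functions of $k$) through $K_1(k)$ and $K_2(k)$.

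First I would settle $\lambda=1$ (minimum distance $3$), which is the optimality of the shortened Hamming code and fixes the denominator $n(q-1)+q$. Here $A_1=A_2=0$, hence $\sum_k B_kK_k(1)=\sum_k B_kK_k(2)=0$, and I seek multipliers $\mu_1,\mu_2$ making the degree‑$2$ polynomial in $k$
$$P(k):=1-\frac{\mu_1}{K_1(0)}\,K_1(k)-\frac{\mu_2}{K_2(0)}\,K_2(k)$$
nonnegative on $\{1,\dots,n\}$; then $B_k\ge0$ gives $\sum_k B_kK_k(0)\ge P(0)=1-\mu_1-\mu_2$, so $|C|\le q^n/(1-\mu_1-\mu_2)$. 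The sharpening step is to place the two roots of $P$ at the consecutive integers $\kappa-1,\kappa$ with $\kappa:=1+\tfrac{n(q-1)}q$: writing $P(k)=A(k-\kappa+1)(k-\kappa)$ forces $A=\tfrac{q^2}{n(q-1)}$, $\mu_1=-2(q-1)$, $\mu_2=-(n-1)(q-1)$, and $P(0)=1+(n+1)(q-1)=n(q-1)+q$, i.e. $|C|\le q^n/(n(q-1)+q)$. The hypothesis $n\equiv q\bmod q^2$ enters precisely to guarantee that $\kappa$ is an integer, so that the open interval $(\kappa-1,\kappa)$ contains no integer and $P\ge0$ indeed holds throughout $\{1,\dots,n\}$.

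For general $\lambda$ the coefficients $A_1,A_2$ no longer vanish, and they re‑enter through the same multipliers: the identical manipulation yields $|C|\le q^n\,(A_0-\mu_1A_1-\mu_2A_2)/P(0)$. The purpose of Lemma~\ref{l:A012} is to cap this excess, since \eqref{eq:odd} together with $A_0\ge1$ controls the admissible combination $2(q-1)A_1+2A_2$, and this is what should convert the sphere‑packing factor $\lambda$ into $(n+1)\lambda-1$ over $n$. Making the two sources of slack match exactly is the step I expect to be the main obstacle: a literal reuse of the $\lambda=1$ multipliers overshoots badly, because the coefficient $-\mu_2=(n-1)(q-1)$ on $A_2$ is large while the Lemma only weights $A_2$ by $2$; so for $\lambda>1$ the certificate $P$ must be re‑optimised jointly with the Lemma constraint (rather than taken with the $\lambda=1$ roots), and one must verify both $P\ge0$ on $\{1,\dots,n\}$ and that the coupled estimate collapses to the stated closed form.

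The two remaining clauses are refinements of this scheme. When $q,n,\lambda$ are all even, one replaces \eqref{eq:odd} by the parity‑improved \eqref{eq:even}, which removes one more unit of admissible excess and produces the slightly stronger bound. For the equality statement, tracing the chain backwards shows that equality in the final estimate forces equality in Lemma~\ref{l:A012}, whence $A_0=1$; that is, the extremal packing carries no repeated codewords.
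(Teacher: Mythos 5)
Your framework and your $\lambda=1$ certificate are exactly the paper's: your $P(k)$ with roots at the consecutive integers $\kappa-1,\kappa$ is precisely the paper's polynomial
$\alpha(i)=\big(n(q-1)-qi\big)\big(n(q-1)-qi+q\big)=n(q-1)K_0(i)+2(q-1)K_1(i)+2K_2(i)$
divided by $n(q-1)$, and the paper concludes in the same way, via
$\alpha(0)B_0\le\sum_i\alpha(i)B_i=\frac{q^n}{|C|}\big(n(q-1)A_0+2(q-1)A_1+2A_2\big)$.
However, your treatment of general $\lambda$ --- which is the actual content of the theorem --- has a genuine gap, and it stems from a normalization slip. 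After the inversion, the estimate reads
$|C|\le q^n\big(A_0-\tfrac{\mu_1}{K_1(0)}A_1-\tfrac{\mu_2}{K_2(0)}A_2\big)/P(0)$,
not $q^n(A_0-\mu_1A_1-\mu_2A_2)/P(0)$ as you wrote: the multipliers must be divided by $K_1(0)=n(q-1)$ and $K_2(0)=\binom{n}{2}(q-1)^2$, consistently with your own definition of $P$. Doing so turns $-\mu_2=(n-1)(q-1)$ into $-\mu_2/K_2(0)=\frac{2}{n(q-1)}$, so the combination you actually obtain is
$A_0+\frac{2}{n}A_1+\frac{2}{n(q-1)}A_2=\frac{1}{n(q-1)}\big(n(q-1)A_0+2(q-1)A_1+2A_2\big)$ ---
exactly, coefficient for coefficient, the left-hand side of \eqref{eq:odd} in Lemma~\ref{l:A012}. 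There is no ``overshoot'' and nothing to re-optimize: applying \eqref{eq:odd} (respectively \eqref{eq:even} in the even case) to the $\lambda=1$ certificate immediately gives
$|C|\le q^n\frac{(n+1)(q-1)\lambda-q+1}{n(q-1)\,P(0)}=\frac{q^n((n+1)\lambda-1)}{n^2(q-1)+nq}$,
which is the stated bound; the equality clause then follows from the last sentence of Lemma~\ref{l:A012}, as you indicate.

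Because of this slip you declare the case $\lambda>1$ an unresolved ``main obstacle'' requiring a joint re-optimization of the certificate with the Lemma constraint, and you never carry that out; as written, your proposal therefore proves only the $\lambda=1$ case (optimality of the shortened Hamming code) and leaves the theorem itself unproven. The repair is one line --- restore the $K_j(0)$ normalizations and observe the exact match with Lemma~\ref{l:A012} --- after which your argument coincides with the paper's. Two smaller points of the same kind: your inversion identity should be $q^nA_j=|C|\sum_k B_kK_j(k)$ (so for $j=0$ it is $\sum_k B_k=q^nA_0/|C|$, not $\sum_k B_kK_k(0)$), and the congruence $n\equiv q \bmod q^2$ is used only through $q\mid n$, which is what makes the two roots of $P$ integers and hence $P\ge 0$ on all of $\{1,\dots,n\}$.
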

\begin{proof}
Let $n\equiv q \bmod q^2$. Define
\begin{IEEEeqnarray*}{rCl}
  \alpha(i) &=& \big(n(q-1)-qi\big)\big(n(q-1)-qi+q\big) \\
   &=& n(q-1)K_0(i)+2(q-1)K_1(i)+2K_2(i).
\end{IEEEeqnarray*}
It is easy to check that the values of $\alpha(i)$
are nonnegative and vanish only for $i \in \big\{\frac{n(q-1)}{q}, \frac{n(q-1)}{q}+1\big\}$. Hence, we have
\begin{IEEEeqnarray*}{rCl}
  \alpha(0)B_0 &\leq & \sum_{i=0}^n \alpha(i)B_i \\
   &=& \frac{q^n}{|C|}\big(n(q-1)A_0+2(q-1)A_1+2A_2\big)\\
   &\leq&  \frac{q^n}{|C|}\big((n+1)(q-1)\lambda-q+1\big),
\end{IEEEeqnarray*}
where the last inequality comes
from~\eqref{eq:odd}
in~Lemma~\ref{l:A012}. Since $\alpha(0)=n(q-1)(n(q-1)+q)$, we have
\begin{IEEEeqnarray*}{rCl}
  |C| &\leq& q^n \cdot \frac{(n+1)(q-1)\lambda-q+1}{n(q-1)(n(q-1)+q)}=\frac{q^n((n+1)\lambda-1)}{n^2(q-1)+nq},
\end{IEEEeqnarray*}
which is the first inequality in the claim of the theorem. The second inequality is proved
similarly, using~\eqref{eq:even}
in~Lemma~\ref{l:A012}. Also by Lemma~\ref{l:A012}, the equality implies that
$C$ has no multiple codewords.
\end{proof}

The \emph{uniformly packed codes},
in the sense of~\cite{GoeTil:UPC},
are special cases of completely regular codes
and mentioned below without a definition.
\begin{corollary}\label{c:cr}
 All shortened-$1$-perfect-like codes are completely regular and uniformly packed.
\end{corollary}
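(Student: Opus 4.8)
The plan is to obtain Corollary~\ref{c:cr} as an equality-case phenomenon of Theorem~\ref{th:ub} combined with the classical theory of the dual distance distribution. First I would note that a shortened-$1$-perfect-like code $C$, having minimum distance $3$, has pairwise disjoint radius-$1$ balls and is therefore a $1$-fold $1$-packing, so Theorem~\ref{th:ub} applies with $\lambda=1$ (for $q=2$ one uses instead the classical binary bound of Best and Brouwer~\cite{BesBro77}, and the argument below is unchanged). Since $n=(q^m-q)/(q-1)$ gives $n(q-1)+q=q^m$ and $n\equiv q\bmod q^2$, the first bound specializes to $|C|\le q^n/\big(n(q-1)+q\big)=q^{n-m}$, which is exactly $|C|$. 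Hence $C$ meets Theorem~\ref{th:ub} with equality.

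Next I would read off the consequence of equality for the dual sequence $(B_i)$. The proof of Theorem~\ref{th:ub} estimates $\alpha(0)B_0\le\sum_{i}\alpha(i)B_i$ using only $\alpha(i)\ge 0$ and the nonnegativity $B_i\ge 0$ (Delsarte); equality in the bound forces equality here, i.e. $\sum_{i\ge 1}\alpha(i)B_i=0$, hence $\alpha(i)B_i=0$ for all $i\ge 1$. As $\alpha(i)$ vanishes only at $i\in\{q^{m-1}-1,q^{m-1}\}$, the external distance $s^{*}:=|\{i\ge 1:B_i\ne 0\}|$ is at most $2$. To pin it down I would record that $C$ is imperfect: its radius-$1$ balls have total size $q^{n-m}\big(1+n(q-1)\big)=q^{n-m}(q^m-q+1)<q^n$, so they fail to cover $H(n,q)$ and the covering radius satisfies $\rho\ge 2$. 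Delsarte's inequality $\rho\le s^{*}$ then yields $2\le\rho\le s^{*}\le 2$, whence $s^{*}=\rho=2$.

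Finally, writing $d=3=2e+1$ with $e=1$, we land exactly in the boundary regime $d=2s^{*}-1$, equivalently $s^{*}=e+1$. I would close by invoking the classical theorem that a code whose minimum distance and external distance satisfy $d\ge 2s^{*}-1$ is completely regular, and that the boundary case $d=2s^{*}-1$ with $d=2e+1$ is precisely the uniformly packed case in the sense of~\cite{GoeTil:UPC}; applied to $C$ this gives the corollary. The LP bookkeeping is immediate from the equality case already isolated in Lemma~\ref{l:A012} and poses no difficulty. The main obstacle is this last step: verifying that the derived data $s^{*}=\rho=2$ together with the separation $d=2s^{*}-1$ indeed meet the hypotheses of these characterizations and hence force the distance partition $C^{(0)},\dots,C^{(\rho)}$ to be equitable, with the outer distribution matrix (of rank $s^{*}+1$) having exactly $\rho+1$ distinct rows, one per distance class. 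Note that the restriction $s^{*}\le 2$ is what distinguishes these codes from arbitrary $(n,M,3)_q$ codes, which need not be completely regular at all.
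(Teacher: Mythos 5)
Your proposal is correct and follows essentially the same route as the paper: equality in the bound of Theorem~\ref{th:ub} forces $B_i=0$ except at the two zeros of $\alpha$, giving external distance $s^*\le 2$, after which complete regularity and uniform packedness follow from Delsarte's theorem and Goethals--van Tilborg's Theorem~12, exactly the references the paper invokes. Your additional bookkeeping (pinning down $s^*=\rho=2$ via the covering-radius argument, and the aside on $q=2$) is sound but not needed for the cited theorems, which the paper applies directly from $s^*\le 2$ and $d=3$.
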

\begin{proof}
 As follows from the proof of Theorem~\ref{th:ub},
 a multifold packing attaining the bound
 (in particular, a shortened-$1$-perfect-like code)
 has at most two nonzero
 coefficients $B_i$, apart from $B_0$.
 Since the minimum distance of a
 shortened-$1$-perfect-like code is $3$,
 it is uniformly packed by~\cite[Theorem~12]{GoeTil:UPC}
 and completely regular by \cite[Theorem~5.13]{Delsarte:1973}
 (note that our definition of completely regular codes differs from the original one in~\cite{Delsarte:1973},
 but is equivalent to it due to~\cite{Neumaier92}).
\end{proof}

With an additional restriction on the minimum distance,
the bound in Theorem~\ref{th:ub} can be improved as follows.

\begin{theorem}\label{th:dist2}
If $q>2$ and $n\equiv q \bmod q^2$,
then every $\lambda$-fold $1$-packing $C$
in $H(n,q)$
with minimum distance $2$ satisfies
\begin{equation}\label{eq:dist2}
|C| \le \frac{\lambda q^n}{n(q-1)+q}.
\end{equation}
\end{theorem}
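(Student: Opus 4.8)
The plan is to run the Krawtchouk/linear-programming argument of Theorem~\ref{th:ub} essentially unchanged, but to feed into it a sharper input inequality that the minimum-distance-$2$ hypothesis makes available. The entire improvement over Theorem~\ref{th:ub} comes from this one sharpening, which I would isolate first.

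First I would record the effect of the distance constraint: minimum distance $2$ forces $A_1(\vc{x})=0$ for every $\vc{x}\in C$ (no two codewords lie at Hamming distance $1$, although coincidences contributing to $A_0$ are still permitted). I then return to the very first displayed inequality in the proof of Lemma~\ref{l:A012}, namely the count over the sphere $S=\{\vc{y}:d(\vc{x},\vc{y})=1\}$,
\[
n(q-1)A_0(\vc{x})+(q-1)A_1(\vc{x})+2A_2(\vc{x})\le n(q-1)\lambda,
\]
delete the now-vanishing middle term, and average over $\vc{x}\in C$ to obtain
\begin{equation}\label{eq:starpack}
n(q-1)A_0+2A_2\le n(q-1)\lambda .
\end{equation}
It is crucial to derive \eqref{eq:starpack} from this sphere count rather than from the statement of Lemma~\ref{l:A012}: merely putting $A_1=0$ into the Lemma yields the weaker $n(q-1)A_0+2A_2\le n(q-1)\lambda+(q-1)(\lambda-1)$, since the Lemma folds in the extra slack from its second counting step, and that slack is exactly what separates the sharper constant $n(q-1)+q$ from the one in Theorem~\ref{th:ub}.

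Second I would reuse, verbatim, the polynomial $\alpha(i)=\big(n(q-1)-qi\big)\big(n(q-1)-qi+q\big)=n(q-1)K_0(i)+2(q-1)K_1(i)+2K_2(i)$ from the proof of Theorem~\ref{th:ub}. As shown there, $\alpha(i)\ge 0$ for every integer $i\in\{0,1,\dots,n\}$ (this is where $n\equiv q\bmod q^2$ enters) and its Krawtchouk coefficients are nonnegative; hence, using $B_i\ge 0$ and $B_0=1$,
\[
\alpha(0)\le\sum_{i=0}^n\alpha(i)B_i=\frac{q^n}{|C|}\big(n(q-1)A_0+2(q-1)A_1+2A_2\big).
\]
Substituting $A_1=0$ and applying \eqref{eq:starpack} bounds the right-hand side by $\frac{q^n}{|C|}\,n(q-1)\lambda$, and since $\alpha(0)=n(q-1)\big(n(q-1)+q\big)$, solving for $|C|$ gives precisely \eqref{eq:dist2}.

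I do not anticipate any real difficulty in the algebra, because the nonnegativity of $\alpha$ and the identity expressing $\sum_i\alpha(i)B_i$ through $A_0,A_1,A_2$ were already established for Theorem~\ref{th:ub}. The only delicate point, and the sole place where the distance hypothesis is used, is the passage to \eqref{eq:starpack}: I must reuse the sphere count inside the proof of Lemma~\ref{l:A012} instead of its concluding inequality, so as not to carry along the superfluous term $(q-1)(\lambda-1)$. As a consistency check I would confirm tightness on the punctured Hamming code, for which $n(q-1)+q=q^{m}$, so that $\lambda=q$ reduces the bound to $q^{\,n-m+1}$, matching the code's cardinality.
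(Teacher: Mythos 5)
Your proposal is correct and follows essentially the same route as the paper: the paper likewise uses $A_1(\vc{x})=0$ together with the sphere-covering count (stated there as $2A_2(\vc{x})\le n(q-1)(\lambda-A_0(\vc{x}))$, which is exactly your inequality $n(q-1)A_0(\vc{x})+2A_2(\vc{x})\le n(q-1)\lambda$) and then invokes the same $\alpha(i)$ Krawtchouk argument from Theorem~\ref{th:ub}. Your remark that one must reuse the sphere count rather than the concluding inequality of Lemma~\ref{l:A012} correctly identifies where the sharper constant comes from, and matches what the paper does.
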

\begin{proof}
Let $C$ be a $\lambda$-fold $1$-packing with distance distribution $(A_0,A_1, \ldots , A_n)$. Then for any $\bar{x}\in C$, we have
\begin{multline*}
  A_2(\bar{x})=|\{\bar{z}\in C: d(\bar{x},\bar{z})=2\}|
  \\
  \le \bigg\lfloor \frac{n(q-1)(\lambda-A_0(\bar{x}))}{2}\bigg\rfloor
   = \frac{n(q-1)(\lambda-A_0(\bar{x}))}{2},
\end{multline*}
where the last equality comes from $n\equiv q \bmod q^2$.
Using $A_1(\bar{x})=0$ yields
\begin{multline}\label{eq:chain1}
n(q-1)A_0(\bar{x})+2(q-1)A_1(\bar{x})+2A_2(\bar{x})\\
 \le
n(q-1)A_0(\bar{x})+n(q-1)(\lambda-A_0(\bar{x}))
=
n(q-1)\lambda.
\end{multline}
Then an argument similar to that in the proof of Theorem \ref{th:ub} gives the upper bound for the size of $C$.
\end{proof}

\begin{corollary}\label{cor:punct}
If $q$ is a prime power and $\lambda\in\{1,\ldots,q\}$,
then the maximum size of a $\lambda$-fold $1$-packing
with minimum distance at least $2$ in $H(n,q)$,
where $n=(q^m-q)/(q-1)$,
$m=2,3,\ldots$,
is exactly $\lambda q^n / (nq-n+q)$.
\end{corollary}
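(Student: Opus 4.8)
The plan is to prove the two matching inequalities separately. First I would record the arithmetic behind the statement: writing $n=q+q^2+\cdots+q^{m-1}$ for $n=(q^m-q)/(q-1)$, one sees at once that $n\equiv q \bmod q^2$ and that $n(q-1)+q=q^m$, so that $nq-n+q=q^m$ and the claimed value equals $\lambda q^{n-m}$. The upper bound $|C|\le \lambda q^n/(nq-n+q)$ is then, for $q>2$, exactly the content of Theorem~\ref{th:dist2} (note that $n(q-1)+q=nq-n+q$, and that the proof of that theorem only uses $A_1(\bar{x})=0$, i.e. minimum distance at least $2$). For the single remaining case $q=2$, where Theorem~\ref{th:dist2} does not apply, I would invoke the binary bound of Krotov and Potapov~\cite[Theorem~4]{KroPot:multifold}.

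It then remains to construct, for every $\lambda\in\{1,\dots,q\}$, a $\lambda$-fold $1$-packing of minimum distance at least $2$ whose size is exactly $\lambda q^{n-m}$. I would start from the linear Hamming code $H$ in $H(N,q)$ with $N=n+1=(q^m-1)/(q-1)$: it is $1$-perfect, has minimum distance $3$, and has $q^{N-m}$ codewords. Puncturing the last coordinate produces $H^-\subseteq H(n,q)$; since the minimum distance of $H$ exceeds $1$, puncturing is injective and cannot lower the distance below $2$, so $H^-$ has minimum distance at least $2$. The last-coordinate form $\bar{c}\mapsto c_N$ is a nonzero linear functional on $H$ (no coordinate of a Hamming code is identically zero), hence surjective, and it splits $H$ into $q$ cosets $H_a=\{\bar{c}\in H:\ c_N=a\}$, each of size $q^{n-m}$, so that each punctured coset $H_a^-$ is a shortened Hamming code. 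For any set $A\subseteq\{0,\dots,q-1\}$ with $|A|=\lambda$ I set $C:=\bigcup_{a\in A}H_a^-$; injectivity of puncturing makes this a disjoint union, whence $|C|=\lambda q^{n-m}$, and $C\subseteq H^-$ retains minimum distance at least $2$. For $\lambda=q$ this recovers the whole punctured Hamming code $H^-$.

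The heart of the argument is to verify that $C$ is a $\lambda$-fold $1$-packing, for which I would use the following lifting observation. Fix $\bar{y}\in H(n,q)$ and suppose $\bar{c}^-\in C$ lies within distance $1$ of $\bar{y}$, where $\bar{c}\in H$ and $a:=c_N\in A$. Lifting $\bar{y}$ to the word $\bar{y}a\in H(N,q)$, we have $d(\bar{c},\bar{y}a)=d(\bar{c}^-,\bar{y})\le 1$, so by $1$-perfectness of $H$ the codeword $\bar{c}$ is the \emph{unique} element of $H$ within distance $1$ of $\bar{y}a$; in particular $\bar{c}$ is completely determined by $a$. Consequently each value $a\in A$ contributes at most one codeword of $C$ within distance $1$ of $\bar{y}$, for a total of at most $|A|=\lambda$. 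Thus the radius-$1$ balls centered at the words of $C$ cover every vertex of $H(n,q)$ at most $\lambda$ times, as required, and combining with the upper bound gives the exact value.

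I expect the main obstacle to be this last verification, and specifically the bookkeeping of the lifting map $a\mapsto\bar{c}$: everything hinges on the fact that $1$-perfectness of $H$ forces at most one codeword near each of the $q$ lifts of $\bar{y}$, together with injectivity of puncturing for a minimum-distance-$3$ code. With that in hand the construction is uniform in $q$ and $\lambda$, and the $\lambda=q$ instance yields the punctured Hamming code as an optimal $q$-fold $1$-packing of minimum distance $2$, matching the assertion announced in the introduction.
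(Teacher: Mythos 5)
Your proposal follows essentially the same route as the paper's own proof: the upper bound is quoted from Theorem~\ref{th:dist2}, and the lower bound is realized by taking $\lambda$ cosets of the shortened Hamming code that lie inside the punctured Hamming code --- your codes $H_a^-$ (puncturings of the subcodes of $H$ with fixed last symbol $a$) are exactly those cosets, since each $H_a$ is a coset of $H_0$ in $H$ and puncturing, injective on $H$ by minimum distance $3$, transports this coset structure to $H^-$. Your lifting verification of the $\lambda$-fold packing property is correct, but heavier than necessary: since all words of $H_a$ agree in the last symbol, puncturing does not decrease pairwise distances, so each $H_a^-$ has minimum distance $3$ and is therefore a $1$-fold $1$-packing by the triangle inequality, and a union of $\lambda$ one-fold $1$-packings is automatically a $\lambda$-fold $1$-packing; this is all that is behind the paper's ``we can easily construct''.

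The one genuine flaw is your patch for $q=2$. You are right that Theorem~\ref{th:dist2} is stated only for $q>2$, but \cite[Theorem~4]{KroPot:multifold} does not close the remaining case: that theorem bounds \emph{unrestricted} binary multifold packings, and its case~(c) (the one that Theorem~\ref{th:ub} generalizes) gives, for $n\equiv 2\bmod 4$,
\begin{equation*}
|C|\le \frac{2^n\bigl((n+1)\lambda-1\bigr)}{n(n+2)},
\end{equation*}
which for $\lambda=2$ equals $2^{n-m+1}+2^{n-m}/n$ and thus, for $m\ge 3$, admits sizes strictly larger than the required $2^{n-m+1}$; the distance-$2$ hypothesis is essential and is simply not used by that bound. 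The cheapest repair is to note that the \emph{proof} of Theorem~\ref{th:dist2} nowhere uses $q>2$: the counting inequality $A_2(\bar x)\le n(q-1)\bigl(\lambda-A_0(\bar x)\bigr)/2$ and the Delsarte argument with $\alpha(i)=\bigl(n(q-1)-qi\bigr)\bigl(n(q-1)-qi+q\bigr)$ go through verbatim for $q=2$, $n\equiv 2 \bmod 4$. To be fair, the paper's own proof has the same blind spot (it also cites only Theorem~\ref{th:dist2}, while the corollary as stated admits the prime power $q=2$), but since you explicitly undertook to handle $q=2$, the insufficiency of the citation is a gap in your argument.
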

\begin{proof}
 The upper bound is claimed in Theorem~\ref{th:dist2}.
 To establish the lower bound, we can easily construct
 a $\lambda$-fold $1$-packing of size
 $\lambda q^n / (nq-n+q)$
 as the union of $\lambda$ cosets of the shortened
 Hamming $((q^m-q)/(q-1),q^{n-m},3)_q$ code.
 To guarantee the minimum distance at least $2$ between
 codewords, we take only cosets that are included
 in the punctured Hamming
 $((q^m-q)/(q-1),q^{n-m+1},2)_q$ code.
\end{proof}

\subsection{Multiple coverings}\label{s:cov}

The following lower bound for the size of $q$-ary $(n,\cdot,1,\mu)$ multiple covering is derived from Theorem~\ref{th:ub}. It should be noted that,
in contrast to multifold packing, multiple coverings are defined as ordinary sets,
and the complement argument below does not work for their multiset analogs.

\begin{corollary}\label{th:lb}
If $q>2$ and $n\equiv q \bmod q^2$, then every $q$-ary $(n,\cdot,1,\mu)$ multiple covering $\overline C$ of length $n$ satisfies
$$|\overline C| \geq \frac{q^n(n+1)\mu}{n^2(q-1)+nq}.$$
\end{corollary}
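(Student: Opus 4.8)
The plan is to pass to the set-theoretic complement and recognise it as a multifold packing to which Theorem~\ref{th:ub} already applies. Concretely, I would put $C := H(n,q)\setminus\overline{C}$, the set of all words not belonging to $\overline{C}$. Because a multiple covering is by definition an ordinary set (this is precisely the point emphasised in the remark preceding the statement, and it is what makes the manoeuvre legitimate), the complement $C$ is a well-defined ordinary set with $|C| = q^n - |\overline{C}|$.

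The first substantive step is to read off the packing multiplicity of $C$. Fix any vertex $\vc{x}$ and recall that the radius-$1$ ball around $\vc{x}$ contains exactly $1+n(q-1)$ words. The covering hypothesis supplies at least $\mu$ words of $\overline{C}$ within distance $1$ of $\vc{x}$, hence at most $(1+n(q-1))-\mu$ words of $C$ within distance $1$ of $\vc{x}$. Equivalently, the radius-$1$ balls centred at the words of $C$ cover every vertex at most $\lambda$ times, where
$$\lambda := n(q-1)+1-\mu,$$
so that $C$ is a $\lambda$-fold $1$-packing. (One may assume $\mu\le n(q-1)+1$, since otherwise no covering exists; this also gives $\lambda\ge 0$.)

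Next I would invoke Theorem~\ref{th:ub} for this $\lambda$-fold $1$-packing $C$; it applies because $q>2$ and $n\equiv q\bmod q^2$ are exactly the hypotheses of the corollary. This yields
$$q^n-|\overline{C}| = |C| \le \frac{q^n\big((n+1)\lambda-1\big)}{n^2(q-1)+nq}.$$
Solving for $|\overline{C}|$ and substituting $\lambda=n(q-1)+1-\mu$, the resulting numerator $n^2(q-1)+nq-(n+1)\lambda+1$ collapses after routine cancellation to exactly $(n+1)\mu$, which produces the claimed inequality.

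I do not anticipate a genuine obstacle here, as the computation is mechanical; the only points demanding care are (i) translating the covering multiplicity $\mu$ into the packing multiplicity $\lambda$ through the ball size $1+n(q-1)$, and (ii) verifying that the final algebraic simplification really delivers the factor $(n+1)\mu$ and nothing more. The one conceptual caveat worth recording is that the whole argument hinges on $\overline{C}$ being a set rather than a multiset: only then is its complement a bona fide packing, which is exactly why the statement is confined to ordinary multiple coverings.
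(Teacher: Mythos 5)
Your proposal is correct and follows essentially the same route as the paper's own proof: complement the covering, identify the complement as a $\lambda$-fold $1$-packing with $\lambda = n(q-1)+1-\mu$, apply Theorem~\ref{th:ub}, and simplify. If anything, your write-up is slightly more careful than the paper's, since you argue in the logically required direction (from a given covering to its complementary packing) and verify the multiplicity translation explicitly, whereas the paper states the complementation in the reverse direction and leaves the check implicit.
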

\begin{proof}
Let $C$ be a $\lambda$-fold $1$-packing in $H(n,q)$. Then it is easy to check its complement $\overline C=H(n,q)\backslash C$ is a $q$-ary $(n,\cdot,1,\mu)$ multiple covering, where $\mu=n(q-1)+1-\lambda$. According to Theorem \ref{th:ub}, we have
\begin{IEEEeqnarray*}{rCl}
  q^n-|\overline C| &\le& \frac{q^n\Big((n+1)(n(q-1)+1-\mu)-1\Big)}{n^2(q-1)+nq}.
\end{IEEEeqnarray*}
This produces our bound.
\end{proof}

\section[Shortened-1-perfect-like codes that are not shortened 1-perfect]{Shortened-$1$-perfect-like codes that are not shortened $1$-perfect}\label{s:main2}

The main goal of this section is to construct codes with parameters of shortened $1$-perfect codes,
\emph{shortened-$1$-perfect-like codes},
that cannot be obtained by shortening $1$-perfect codes.
We construct such codes for the alphabet of size $4$.
In the last subsection we briefly discuss other alphabets.
The parameters of some codes that occur in our theory
attain the Singleton bound $M\le q^{n-d+1}$; such codes are known as \emph{maximum distance separable} (\emph{MDS}, for short) codes.
\subsection{Concatenation construction}
In this section, we adopt
the concatenation construction~\cite{Romanov:2019}
for $1$-perfect codes (a generalization of the Solov'eva construction~\cite{Sol:81},
whose special case, in its turn, can be treated in terms of earlier concatenation construction of Heden~\cite[Theorem~1]{Heden:77})
to construct shortened-$1$-perfect-like codes. As we will see below, not all codes obtained by this construction
can be obtained by shortening from $1$-perfect codes.

\begin{lemma}[Romanov \cite{Romanov:2019}]\label{l:rom}
Assume
$n=(q^m-1)/(q-1)$, $n'=(q^{m-1}-1)/(q-1)$, $n''=q^{m-1}$.
Let $(C_0, \ldots ,C_{n''-1})$ be a partition
of the Hamming space $H(n',q)$ into
$1$-perfect $(n', q^{n'-(m-1)}, 3)_q$ codes.
Let $(D_0, \ldots ,D_{n''-1})$ be a partition
of an $(n'',q^{n''-1},2)_q$ MDS code into $n''$
  codes with parameters
  $(n'', q^{n''-m}, 3)_q$.
Then the code
\begin{equation}
 \label{eq:P}
P:= \bigcup_{i=0}^{q^{m-1}-1} D_i  C_i
\end{equation}
is a $1$-perfect $(n, q^{n-m}, 3)_q$ code.
\end{lemma}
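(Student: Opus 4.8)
The plan is to verify that the code $P$ defined by the concatenation \eqref{eq:P} has the correct length, cardinality, and distance-$3$ covering property characteristic of a $1$-perfect code. The key structural fact is the dimension bookkeeping $n = n'' + n'$, which follows from $(q^m-1)/(q-1) = q^{m-1} + (q^{m-1}-1)/(q-1)$; this tells us that a codeword $\bar d \bar c$ with $\bar d \in D_i \subseteq H(n'',q)$ and $\bar c \in C_i \subseteq H(n',q)$ indeed has length $n$. First I would compute the size: since the $D_i$ partition an MDS code of size $q^{n''-1}$ while each $C_i$ has size $q^{n'-(m-1)}$, and the $C_i$ partition all of $H(n',q)$, one shows $|P| = \sum_i |D_i|\,|C_i|$ collapses to $q^{n-m}$ using $|D_i| = q^{n''-m}$ and the fact that $\sum_i |D_i| = q^{n''-1}$.

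The main work is the $1$-perfect verification, which I would phrase as: every word $\bar u \bar v \in H(n,q)$, with $\bar u \in H(n'',q)$ and $\bar v \in H(n',q)$, lies within distance $1$ of exactly one codeword of $P$. The natural approach is a counting/sphere-packing argument combined with a covering argument. For the covering direction, given $\bar u \bar v$, let $j$ be the unique index such that $\bar v \in C_j$ (this exists and is unique because the $C_i$ partition $H(n',q)$). The plan splits into two cases according to whether $\bar u$ lies in the block code $D_j$ or not. If $\bar u \in D_j$, then $\bar u \bar v \in P$ already, and any other nearby codeword would have to agree in one of the two parts; here I would invoke the minimum distance $3$ of the constituent codes to rule out a second covering codeword. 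If $\bar u \notin D_j$, I would use the covering radius properties: the MDS code $\bigcup_i D_i$ is a $(n'',q^{n''-1},2)_q$ code, so it covers $H(n'',q)$ with radius $1$, forcing $\bar u$ to be within distance $1$ of exactly one word of the MDS code, and the partition structure then pins down which $D_i$ that word belongs to, matching with the $C_i$-coordinate via the partition of $C$'s.

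\textbf{The hard part} will be correctly coordinating the two partition structures so that the covering is both existent and \emph{unique} — i.e. genuinely a perfect covering with no vertex double-covered and none missed. The subtlety is that a word $\bar u \bar v$ can potentially be corrected either in its first block (changing $\bar u$ to reach some $D_i$ while keeping $\bar v \in C_i$) or in its second block (changing $\bar v$ while keeping $\bar u \in D_j$), and one must show these possibilities are mutually exclusive and jointly exhaustive. I expect this to hinge on the compatibility condition that the index $i$ of the $D_i$-block equals the index $i$ of the $C_i$-block in the union \eqref{eq:P}: the partition $(D_0,\dots,D_{n''-1})$ of the MDS code induces, via the MDS/covering structure, exactly one candidate index for any given $\bar u$, and this must be reconciled with the index $j$ determined by $\bar v$. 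A clean way to finish is a pure counting argument: having shown every vertex is covered at least once (via the two-case covering argument) and that $|P|$ times the ball size $1 + n(q-1)$ equals $q^n$ (the sphere-packing identity for $1$-perfect codes, which one checks from $n = (q^m-1)/(q-1)$), the at-least-once covering together with the exact cardinality count forces each vertex to be covered \emph{exactly} once, yielding the $1$-perfect property without a separate uniqueness argument.
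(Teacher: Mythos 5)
The paper itself does not prove this lemma (it is quoted from Romanov's paper), so your proposal can only be judged on its own terms. Your overall strategy is legitimate: the length and cardinality bookkeeping is correct, the identity $q^{n-m}\bigl(1+n(q-1)\bigr)=q^n$ holds, and ``every vertex covered at least once'' plus this count does force $1$-perfection. The cardinality computation is also fine. The genuine gap is in the covering step, in the case $\bar u\notin D_j$ (where $j$ is the index with $\bar v\in C_j$). This case splits into two subcases that behave completely differently, and your sketch conflates them. If $\bar u$ lies outside the MDS code $\bigcup_i D_i$, it is \emph{not} within distance $1$ of exactly one word of the MDS code: it has exactly $n''$ neighbours there, one in each coordinate direction, and --- because each $D_i$ has minimum distance $3$ --- exactly one in each $D_i$; the relevant one is the neighbour $\bar d\in D_j$, giving the covering codeword $\bar d\,\bar v\in D_jC_j$. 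If instead $\bar u\in D_i$ for some $i\ne j$, then no word of $D_j$ is within distance $1$ of $\bar u$ at all (the MDS code has minimum distance $2$), so the correction cannot be made in the first block; it must be made in the second block, using the fact that $C_i$ has covering radius $1$, i.e.\ precisely the $1$-perfectness of the $C_i$'s. Your argument never invokes this hypothesis (you only use that the $C_i$ partition $H(n',q)$), and without it the covering claim cannot be established, so as written the proof does not close.

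Two further remarks. First, the over-strong claim in your first case (``rule out a second covering codeword'') is unnecessary once you finish by counting, so no harm there. Second, there is a shorter standard route that avoids the case analysis entirely: show directly that $P$ has minimum distance $3$ --- within one block $D_iC_i$ this follows from the distance-$3$ property of $D_i$ and $C_i$, and between $D_iC_i$ and $D_{i'}C_{i'}$, $i\ne i'$, one gets at least $2+1=3$ from the minimum distance $2$ of the MDS code plus disjointness of $C_i$ and $C_{i'}$ --- and then conclude by the sphere-packing equality. This is essentially the argument the paper uses for the analogous code $S$ in part (i) of Theorem~\ref{th:1}, and it is the cleaner way to repair your write-up if you prefer not to fix the covering case analysis.
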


To adopt the construction to the parameters under study,
we replace the perfect codes $C_i$ by codes with the parameters of
a shortened $1$-perfect code.

\begin{theorem}\label{th:1}
Assume
$n=(q^m-1)/(q-1)$, $n'=(q^{m-1}-1)/(q-1)$, $n''=q^{m-1}$.
Let $(B_0, \ldots ,B_{n''-1})$ be a partition
of the Hamming space $H(n'-1,q)$
into
 $(n'-1, q^{n'-1-(m-1)}, 3)_q$ codes.
Let $(D_0, \ldots ,D_{n''-1})$ be a partition
of the $(n'',q^{n''-1},2)_q$ MDS code $M_0$ into $n''$
  codes with parameters
  $(n'', q^{n''-m}, 3)_q$, where
  \begin{multline*}
  M_a:=\big\{ x_1 \ldots x_{n''} :\  x_1+ \ldots +x_{n''}\equiv a \bmod q\big\},
  \\
  a\in\{0,\ldots,q-1\}.
  \end{multline*}
  \begin{itemize}
   \item[\rm(i)] the code
$$
S:= \bigcup_{i=0}^{q^{m-1}-1} D_i B_i
$$
is an $(n-1, q^{n-1-m}, 3)_q$ code.
\item[\rm(ii)] $S$ can be lengthened
to a $1$-perfect $(n, q^{n-m}, 3)_q$ code
if and only if every $B_i$
can be lengthened to a $1$-perfect
$(n', q^{n'-(m-1)}, 3)_q$ code $C_i$ such that
$C_0$, \ldots, $C_{q^{m-1}-1}$ form a partition of the Hamming space $H(n',q)$.
\item[\rm(iii)] There is a partition of $H(n-1,q)$ into $q^m$ codes of parameters $(n-1, q^{n-1-m}, 3)_q$,
one of which is $S$.
  \end{itemize}
\end{theorem}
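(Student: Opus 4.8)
The plan is to verify the three parts of Theorem~\ref{th:1} by closely following the structure of Romanov's construction (Lemma~\ref{l:rom}), substituting the shortened-perfect-like codes $B_i$ for the perfect codes $C_i$ and tracking how the parameters change. First I would establish part~(i). The length of a word $\vc{d}\vc{b}$ with $\vc{d}\in D_i\subseteq M_0$ and $\vc{b}\in B_i$ is $n''+(n'-1)=q^{m-1}+(q^{m-1}-1)/(q-1)-1$, which equals $n-1$ after using $n=(q^m-1)/(q-1)$; and the size is $\sum_i |D_i|\,|B_i|=q^{m-1}\cdot q^{n''-m}\cdot q^{n'-1-(m-1)}=q^{n-1-m}$, again by arithmetic on the exponents. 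The only substantive point is the minimum distance. Take two distinct codewords $\vc{d}\vc{b}$ and $\vc{d}'\vc{b}'$. If they lie in the same block (same index $i$), then $\vc{d},\vc{d}'\in D_i$ and $\vc{b},\vc{b}'\in B_i$ are each pairs within a distance-$3$ code, so whichever of the two coordinates differs contributes at least $3$; if they lie in different blocks $i\neq j$, then both $\vc{d}\in D_i$, $\vc{d}'\in D_j$ lie in the same MDS code $M_0$ of minimum distance $2$ with $\vc{d}\neq\vc{d}'$, giving $d(\vc{d},\vc{d}')\ge 2$, while $\vc{b},\vc{b}'$ belong to different blocks $B_i\neq B_j$ of a \emph{partition}, hence $\vc{b}\neq\vc{b}'$ and $d(\vc{b},\vc{b}')\ge 1$, for a total of at least~$3$. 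This mirrors the distance argument implicit in Lemma~\ref{l:rom}.

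Next I would prove part~(iii), since it is the cleanest and supplies a partition I can reuse. The idea is that the MDS code $M_0$ is one coset of a larger structure: the $q$ sets $M_0,\ldots,M_{q-1}$ partition the whole space $H(n'',q)$ (every word has some coordinate-sum $a\bmod q$). For each $a$, translating the construction by replacing $M_0$ with $M_a$ yields a code $S^{(a)}$ of the same parameters; and within each fixed $M_0$-based code there are already $q^{m-1}$ blocks. I would argue that varying $a$ over $\{0,\ldots,q-1\}$ while also varying the choice across the $B_i$-partition produces $q\cdot q^{m-1}=q^m$ pairwise-disjoint codes whose union is all of $H(n-1,q)$: disjointness across different $a$ follows from the first-coordinate-block's coordinate sum, and disjointness within fixed $a$ follows from the $(B_i)$ partition; the count $q^m\cdot q^{n-1-m}=q^{n-1}=|H(n-1,q)|$ confirms the union is everything. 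The minor obstacle here is bookkeeping—checking that the blocks $D_i$ inside each $M_a$ still form a partition of $M_a$ into $(n'',q^{n''-m},3)_q$ codes, which holds because all $M_a$ are cosets/translates of one another and MDS partitions translate.

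The main obstacle will be part~(ii), the ``if and only if'' characterization of lengthenability. The plan is to trace the effect of lengthening coordinatewise. Lengthening $S$ to a perfect $(n,q^{n-m},3)_q$ code means prepending (or appending) one symbol to each codeword consistently; the natural candidate is to lengthen each inner code $B_i$ to some $C_i$ on $H(n',q)$ and take $\bigcup_i D_i C_i$, which by Lemma~\ref{l:rom} is $1$-perfect precisely when the $C_i$ partition $H(n',q)$. For the ``if'' direction this is essentially Lemma~\ref{l:rom} applied verbatim. The hard direction is ``only if'': I must show that \emph{any} lengthening of $S$ to a $1$-perfect code necessarily decomposes in this block-respecting form, i.e.\ that the added coordinate lands in the $B_i$-part and that the perfect code's structure forces each $B_i$ to extend to a perfect $C_i$ with the partition property. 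The key step will be to use the rigidity coming from the completely-regular/uniformly-packed structure (Corollary~\ref{c:cr}) together with a counting argument: a $1$-perfect code restricted to the words with a fixed value of the new coordinate must be exactly a union of the $D_iC_i$-blocks, and the partition-into-perfect-codes property on the $C_i$ side is forced by the requirement that the extended code covers $H(n,q)$ perfectly. I expect the delicate point to be ruling out lengthenings that ``mix'' the outer $D_i$-coordinates, which I would handle by exploiting the MDS property of $M_0$ (each $\vc{d}\in M_0$ determines its block uniquely) so that the sphere-packing equality leaves no freedom except the choice of inner extensions $C_i$.
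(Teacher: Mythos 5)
Your parts (i) and (iii) are essentially the paper's own arguments and are fine (in (iii), to make ``varying the choice across the $B_i$-partition'' rigorous you need matchings between the $D$'s and the $B$'s that disagree in \emph{every} index, e.g.\ the cyclic shifts pairing $B_i$ with $D_{i+j}$ used in the paper, since $D_{\pi(i)}B_i$ and $D_{\sigma(i)}B_i$ already intersect as soon as $\pi(i)=\sigma(i)$ for a single $i$). The genuine gap is the ``only if'' half of (ii). What you offer there is a plan whose crucial steps are placeholders: ``rigidity coming from the completely-regular/uniformly-packed structure'', ``a counting argument'', ``sphere-packing equality leaves no freedom''. None of these is an argument, and the intermediate claim you propose to establish --- that the section of a lengthening $P$ at each fixed value of the new coordinate is itself a union of blocks $D_iC_i$ --- is stronger than what is needed, is not obviously true, and is not what the paper proves; moreover, Corollary~\ref{c:cr} plays no role in the paper's proof of this theorem, and it is unclear how complete regularity alone could force any block structure on $P$.

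The missing idea is a concrete way to extract the $C_i$ from a given lengthening $P$. Write $P=\bigcup_{a=0}^{q-1}S_a a$ with $S_0=S$ (this is the WLOG normal form of a lengthening). Pick a word $\vc{y}\in H(n'',q)\setminus M_0$; it has exactly $n''$ neighbors in $M_0$, pairwise at distance $2$, and each $D_i$ (minimum distance $3$) contains exactly one of them, say $\vc{d}_i$; hence $d(\vc{d}_i,\vc{d}_j)=2$ for $i\ne j$. For $\vc{x}\in B_i^{(2)}$, the word $\vc{d}_i\vc{x}0$ lies outside $S0$ and has no neighbor in $S0$ (altering a coordinate of $\vc{d}_i$ exits $M_0$; altering a coordinate of $\vc{x}$ cannot reach $B_i$), so $1$-perfectness of $P$ forces $\vc{d}_i\vc{x}a\in P$ for some $a\ne 0$. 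Define $B_i^a:=\{\vc{x}\in B_i^{(2)}:\vc{d}_i\vc{x}a\in P\}$ and $C_i:=B_i0\cup\bigcup_{a=1}^{q-1}B_i^a a$. Then $\vc{d}_iC_i\subseteq P$, so $C_i$ has minimum distance $3$; moreover $|C_i|=|B_i|+|B_i^{(2)}|=q^{n'-1}-|B_i^{(1)}|=q^{n'-1}-(q-1)(n'-1)|B_i|=q^{n'-m+1}$, where $B_i^{(3)}=\emptyset$ is used (this holds because $B_i$ is an optimal distance-$3$ code, by Theorem~\ref{th:ub} with $\lambda=1$: otherwise a word of $B_i^{(3)}$ could be adjoined while keeping distance $3$). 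So each $C_i$ is a $1$-perfect lengthening of $B_i$, and $C_i\cap C_j=\emptyset$ for $i\ne j$ because $\vc{d}_iC_i$ and $\vc{d}_jC_j$ are both subsets of $P$ while $d(\vc{d}_i,\vc{d}_j)=2<3$; since the $q^{m-1}$ disjoint $1$-perfect codes $C_i$ have total size $q^{n'}$, they partition $H(n',q)$. This representative-and-section construction is the heart of the paper's proof and is absent from your proposal.
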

\begin{proof} (i)
The proof of the parameters of $S$ is straightforward:
the cardinality equals
$\sum_{i=0}^{q^{m-1}-1} |D_i|\cdot|B_i|$,
the minimum distance is $3$ because
minimum distance of $D_i B_i$ is $3$
and the distance between $D_i B_i$ and $D_{i'} B_{i'}$, $i\ne i'$ is $3$
(the distance between $D_i$ and $D_{i'}$ is $2$;
the distance between $B_i$ and $B_{i'}$ is $1$).

(ii)
If $B_i$ is the shortened $C_i$, $i=0, \ldots, q^{m-1}-1$,
where $(C_0, \ldots, C_{q^{m-1}-1})$ is a partition of $H(n',q)$.
Then $S$ is the shortened $P$ from \eqref{eq:P}.

It remains to show the inverse,
that the partition
$(B_0, \ldots, B_{q^{m-1}-1})$ can be lengthened if $S$ can.
Assume that $S$ is lengthened to some $1$-perfect code $P$,
i.e., $S=S_0$, where
$$P=\bigcup_{a=0}^{q-1}S_a a.$$
From each $D_i$, we can choose a representative $\vc{d}_i$
such that $d(\vc{d}_i,\vc{d}_{i'})=2$ if $i\ne i'$
(indeed, if we take a word $\vc{x}$ of length $n''$
not from $\bigcup_{i=0}^{n''-1} D_i$,
then by properties of a distance-$2$
MDS code it has $n''$ neighbors in
$\bigcup_{i=0}^{n''-1} D_i$,
one neighbor in each direction,
and hence one neighbor, $\vc{d}_i$, in each $D_i$ because the minimum distance of $D_i$ is $3$).

Now, consider an arbitrary word $\vc{x}$ from $B_i^{(2)}$.
The word $\vc{d}_i \vc{x}$ has no codeword neighbors in $S$,
because changing $\vc{d}_i$ in one position sends it out of $M_0$,
while changing in one position $\vc{x}$ sends it to $B_i^{(1)}$
but not to $B_i$ (we note that $B_i^{(3)}$ is empty because
$B_i$ is an optimal distance-$3$ code).
Therefore,
$\vc{d}_i \vc{x} 0 $ has no neighbors in $S0$.
Since $P$ is $1$-perfect, $\vc{d}_i \vc{x} 0 $
has a neighbor $\vc{d}_i \vc{x} a$ from $P$
for some $a \in \{1, \ldots,q-1\}$.
It follows that
$$B_i^{(2)} = \bigcup_{a=1}^{q-1} B_i^{a}, \qquad \mbox{where }
B_i^{a}:=\{ \vc{x} \in B_i^{(2)}: \ \vc{d}_i \vc{x} a \in P \}.
$$
Denote
$$ C_i:= B_i 0 \cup  \bigcup_{a=1}^{q-1} B_i^{a} a, \qquad i \in \{0, \ldots, q^{m-1}-1 \}. $$
Since $\vc{d}_i C_i$ is a subset of $P$, we see that $C_i$ is a distance-$3$ code.
On the other hand,
the cardinality of this code
is
\begin{multline*}
 |C_i| = |B_i \cup B_i^{(2)}|
= q^{n'-1} - |B_i^{(1)}|
\\
= q^{n'-1}
- (q-1)(n'-1)
\cdot|B_i|
\\
= q^{n'-1} -
(q^{m-1}-q)
\cdot q^{n'-m}
=
q^{n'-m+1}.
\end{multline*}
That is, $C_i$ is
a $1$-perfect
$(n',q^{n'-m+1},3)_q$ code;
moreover, by construction,
$B_i$ is the shortened $C_i$.
It remains to note that
$C_i$ and $C_j$, $i\ne j$,
are disjoint
because the distance between
$\vc{d}_i C_i$ and $\vc{d}_j C_j$,
which are both subsets of $P$,
is at least $3$.

(iii)
To construct a partition,
we need to construct $q^m$ disjoint
codes with the same parameters as $S$.
Cyclically permuting the indices of the codes
$D_0$, \ldots, $D_{q^{m-1}-1}$ in (i),
we obtain $q^{m-1}$ different codes
with the same parameters as $S$,
and it is easy to see that
they are pairwise disjoint
(indeed, if $D_iB_{i'}$ intersects with $D_jB_{j'}$,
then $i=j$ and $i'=j'$).
The codewords of all these
codes satisfiy the following
property: the subword consisting of the first
$n''$ symbols of the codeword belongs to $M_0$.
Adding modulo $q$
some element $a\in\{0,1, \ldots,q-1\}$
to the first symbol of all codewords
changes this property from $M_0$ to $M_a$;
so,
we have $q^{m-1}$ disjoint codes for each $a$,
and codes with different $a$ are also disjoint.
This gives a required partition.
\end{proof}

\subsection{Non-lengthenable partitions}\label{s:4444}
Here, for $q=4$,
we will present a partition
$(B_0, \ldots, B_{q^2-1})$
of $H(q,q)$ into
$(q,q^{q-2},3)_q$ codes
that cannot be obtained
by shortening
$(q+1,q^{q-1},3)_q$ codes that form
a partition of $H(q+1,q)$.
Note that in the notation of Theorem~\ref{th:1},
these codes correspond to $m=3$;
moreover, for this special case,
$(q,q^{q-2},3)_q$ and
$(q+1,q^{q-1},3)_q$ are parameters of distance-$3$ MDS codes.

For $q=4$, a $(q,q^{q-2},3)_q$ code $B$ is unique up to isomorphism
(indeed such a code is the graph $\{xyf(x,y)g(x,y) :\ x,y\in\{0,1,2,3\}\}$
of the pair $f$, $g$ of orthogonal latin squares of order $4$).
Moreover, $B^{(2)}$ is uniquely partitioned into three distance-$3$ codes.
However, there are a lot of non-isomorphic partitions of $H(4,4)$ into
$(4,4^2,3)_4$
codes. Below, we draw one of them (we denote $\mathrm{A}:=10$, \ldots, $\mathrm{F}:=15$).
$$
\def\A{\mathrm{A}}
\def\B{\mathrm{B}}
\def\C{\mathrm{C}}
\def\D{\mathrm{D}}
\def\E{\mathrm{E}}
\def\F{\mathrm{F}}
\large\begin{array}{|c@{\,}c@{\,}c@{\,}c|c@{\,}c@{\,}c@{\,}c|c@{\,}c@{\,}c@{\,}c|c@{\,}c@{\,}c@{\,}c|}
 \hline
  0&  1&  2&  3&    4&  5&  6&  7&    8&  9& \A& \B&   \C& \D& \E& \F \\[-0.2EM]
 \F& \E&  7&  6&    1&  0&  3& \A&    2&  4& \D& \C&    9& \B&  8&  5 \\[-0.2EM]
 \A&  8& \B&  9&   \E& \F& \C& \D&    5&  6&  0&  1&    3&  7&  4&  2 \\[-0.2EM]
 \D& \C&  5&  4&   \B&  2&  9&  8&    7&  3& \F& \E&    6& \A&  1&  0 \\
  \hline
  6&  7& \F& \E&   \A&  3&  0&  1&   \D& \C&  4&  5&    2&  8& \B&  9 \\[-0.2EM]
  3&  5&  1&  0&    7&  6&  2&  4&   \B& \A&  9&  8&   \E& \F& \C& \D \\[-0.2EM]
  4&  2& \D& \C&    8&  9&  5& \B&   \F& \E&  3&  7&    0&  1& \A&  6 \\[-0.2EM]
  9& \B&  8& \A&   \C& \D& \E& \F&    1&  0&  6&  2&    5&  4&  7&  3 \\
  \hline
 \B& \A&  9&  8&   \F& \E& \D& \C&    3&  2&  7&  0&    1&  6&  5&  4 \\[-0.2EM]
 \C& \D&  4&  2&    5&  8& \B&  9&    6&  1& \E& \F&   \A&  3&  0&  7 \\[-0.2EM]
  7&  0&  6&  5&    2&  4&  1&  3&    9& \B&  8& \A&   \D& \C& \F& \E \\[-0.2EM]
 \E& \F&  3&  1&    0&  7& \A&  6&    4&  5& \C& \D&    8&  9&  2& \B \\
  \hline
  5&  4& \C& \D&    9& \B&  8&  2&   \E& \F&  1&  6&    7&  0&  3& \A \\[-0.2EM]
  8&  9& \A& \B&   \D& \C& \F& \E&    0&  7&  5&  3&    4&  2&  6&  1 \\[-0.2EM]
  1&  3& \E& \F&    6& \A&  7&  0&   \C& \D&  2&  4&   \B&  5&  9&  8 \\[-0.2EM]
  2&  6&  0&  7&    3&  1&  4&  5&   \A&  8& \B&  9&   \F& \E& \D& \C \\
  \hline
\end{array}
$$
The codes $B_2^1$, $B_2^2$, $B_2^3$, up to enumeration, are as follows, where
$\{\alpha,\beta,\gamma\} =\{1,2,3\}$:
\begin{center}
\scalebox{1.2}{
$
\def\0{2}
\def\A{\alpha}
\def\C{\gamma}
\def\B{\beta}
\def\X{\cdot}
\begin{array}{|c@{\,}c@{\,}c@{\,}c|c@{\,}c@{\,}c@{\,}c|c@{\,}c@{\,}c@{\,}c|c@{\,}c@{\,}c@{\,}c|}
 \hline
\X & \X & \0 & \X &  \C & \X & \X & \X &  \X & \X & \X & \B &  \X & \cellcolor{green!20}\A & \X & \X \\[-0.5EM]
\X & \B & \X & \X &  \X & \X & \X & \A &  \0 & \X & \X & \X &  \X & \X & \C & \X \\[-0.5EM]
\cellcolor{blue!15}\A & \X & \X & \X &  \X & \X & \B & \X &  \X & \C & \X & \X &  \X & \X & \X & \0 \\[-0.5EM]
\X & \X & \X & \C &  \X & \0 & \X & \X &  \X & \X & \A & \X &  \B & \X & \X & \X \\\hline

\X & \X & \X & \A &  \X & \B & \X & \X &  \X & \X & \C & \X &  \0 & \X & \X & \X \\[-0.5EM]
\C & \X & \X & \X &  \X & \X & \0 & \X &  \X & \cellcolor{green!20}\A & \X & \X &  \X & \X & \X & \B \\[-0.5EM]
\X & \0 & \X & \X &  \X & \X & \X & \C &  \B & \X & \X & \X &  \X & \X & \A & \X \\[-0.5EM]
\X & \X & \B & \X &  \A & \X & \X & \X &  \X & \X & \X & \0 &  \X & \C & \X & \X \\\hline

\B & \X & \X & \X &  \X & \X & \A & \X &  \X & \0 & \X & \X &  \X & \X & \X & \C \\[-0.5EM]
\X & \X & \X & \0 &  \X & \C & \X & \X &  \X & \X & \B & \X &  \A & \X & \X & \X \\[-0.5EM]
\X & \X & \C & \X &  \0 & \X & \X & \X &  \X & \X & \X & \A &  \X & \B & \X & \X \\[-0.5EM]
\X & \A & \X & \X &  \X & \X & \X & \B &  \C & \X & \X & \X &  \X & \X & \0 & \X \\\hline

\X & \C & \X & \X &  \X & \X & \X & \0 &  \A & \X & \X & \X &  \X & \X & \B & \X \\[-0.5EM]
\X & \X & \A & \X &  \B & \X & \X & \X &  \X & \X & \X & \C &  \X & \0 & \X & \X \\[-0.5EM]
\X & \X & \X & \B &  \X & \A & \X & \X &  \X & \X & \0 & \X &  \C & \X & \X & \X \\[-0.5EM]
\0 & \X & \X & \X &  \X & \X & \C & \X &  \X & \B & \X & \X &  \X & \X & \X & \A \\\hline
\end{array}
$
}
\end{center}

Similarly, $B_3^1$, $B_3^2$, $B_3^3$:
\begin{center}
\scalebox{1.2}{
$
\def\0{3}
\def\A{\tilde \alpha}
\def\C{\tilde\gamma}
\def\B{\tilde\beta}
\def\X{\cdot}
\begin{array}{|c@{\,}c@{\,}c@{\,}c|c@{\,}c@{\,}c@{\,}c|c@{\,}c@{\,}c@{\,}c|c@{\,}c@{\,}c@{\,}c|}
 \hline
\X & \X & \X & \0 &  \A & \X & \X & \X &  \X & \X & \C & \X &  \X & \cellcolor{green!20}\B & \X & \X \\[-0.5EM]
\X & \A & \X & \X &  \X & \X & \0 & \X &  \B & \X & \X & \X &  \X & \X & \X & \C \\[-0.5EM]
\X & \X & \B & \X &  \X & \C & \X & \X &  \X & \X & \X & \A &  \0 & \X & \X & \X \\[-0.5EM]
\C & \X & \X & \X &  \X & \X & \X & \B &  \X & \0 & \X & \X &  \X & \X & \A & \X \\\hline

\X & \X & \A & \X &  \X & \0 & \X & \X &  \X & \X & \X & \B &  \C & \X & \X & \X \\[-0.5EM]
\0 & \X & \X & \X &  \X & \X & \X & \cellcolor{red!20}\A &  \X & \cellcolor{green!20}\C & \X & \X &  \X & \X & \B & \X \\[-0.5EM]
\X & \X & \X & \C &  \B & \X & \X & \X &  \X & \X & \0 & \X &  \X & \A & \X & \X \\[-0.5EM]
\X & \B & \X & \X &  \X & \X & \C & \X &  \A & \X & \X & \X &  \X & \X & \X & \X \\\hline

\X & \C & \X & \X &  \X & \X & \B & \X &  \0 & \X & \X & \X &  \X & \X & \X & \A \\[-0.5EM]
\X & \X & \X & \B &  \C & \X & \X & \X &  \X & \X & \A & \X &  \X & \0 & \X & \X \\[-0.5EM]
\A & \X & \X & \X &  \X & \X & \X & \0 &  \X & \B & \X & \X &  \X & \X & \C & \X \\[-0.5EM]
\X & \X & \0 & \X &  \X & \cellcolor{red!20}\A & \X & \X &  \X & \X & \X & \C &  \B & \X & \X & \X \\\hline

\B & \X & \X & \X &  \X & \X & \X & \C &  \X & \A & \X & \X &  \X & \X & \0 & \X \\[-0.5EM]
\X & \X & \C & \X &  \X & \B & \X & \X &  \X & \X & \X & \0 &  \A & \X & \X & \X \\[-0.5EM]
\X & \0 & \X & \X &  \X & \X & \A & \X &  \C & \X & \X & \X &  \X & \X & \X & \B \\[-0.5EM]
\X & \X & \X & \A &  \0 & \X & \X & \X &  \X & \X & \B & \X &  \X & \C & \X & \X \\\hline
\end{array}
$
}
\end{center}

and $B_{\mathrm{C}}^1$, $B_{\mathrm{C}}^2$, $B_{\mathrm{C}}^3$:

\begin{center}
\scalebox{1.1}{
$
\def\0{{\mathrm C}}
\def\A{\hat\alpha}
\def\C{\hat\gamma}
\def\B{\hat\beta}
\def\X{\cdot}
\begin{array}{|c@{\,}c@{\,}c@{\,}c|c@{\,}c@{\,}c@{\,}c|c@{\,}c@{\,}c@{\,}c|c@{\,}c@{\,}c@{\,}c|}
 \hline
\X & \X & \X & \C &  \X & \B & \X & \X &  \X & \X & \A & \X &  \0 & \X & \X & \X \\[-0.5EM]
\X & \X & \B & \X &  \C & \X & \X & \X &  \X & \X & \X & \0 &  \X & \A & \X & \X \\[-0.5EM]
\cellcolor{blue!15}\A & \X & \X & \X &  \X & \X & \0 & \X &  \X & \C & \X & \X &  \X & \X & \X & \B \\[-0.5EM]
\X & \0 & \X & \X &  \X & \X & \X & \A &  \B & \X & \X & \X &  \X & \X & \C & \X \\\hline

\B & \X & \X & \X &  \X & \X & \C & \X &  \X & \0 & \X & \X &  \X & \X & \X & \A \\[-0.5EM]
\X & \C & \X & \X &  \X & \X & \X & \cellcolor{red!20}\B &  \A & \X & \X & \X &  \X & \X & \0 & \X \\[-0.5EM]
\X & \X & \X & \0 &  \X & \A & \X & \X &  \X & \X & \B & \X &  \C & \X & \X & \X \\[-0.5EM]
\X & \X & \A & \X &  \0 & \X & \X & \X &  \X & \X & \X & \C &  \X & \B & \X & \X \\\hline

\X & \A & \X & \X &  \X & \X & \X & \0 &  \C & \X & \X & \X &  \X & \X & \B & \X \\[-0.5EM]
\0 & \X & \X & \X &  \X & \X & \A & \X &  \X & \B & \X & \X &  \X & \X & \X & \C \\[-0.5EM]
\X & \X & \C & \X &  \B & \X & \X & \X &  \X & \X & \X & \A &  \X & \0 & \X & \X \\[-0.5EM]
\X & \X & \X & \B &  \X & \cellcolor{red!20}\C & \X & \X &  \X & \X & \0 & \X &  \A & \X & \X & \X \\\hline

\X & \X & \0 & \X &  \A & \X & \X & \X &  \X & \X & \X & \B &  \X & \C & \X & \X \\[-0.5EM]
\X & \X & \X & \A &  \X & \0 & \X & \X &  \X & \X & \C & \X &  \B & \X & \X & \X \\[-0.5EM]
\X & \B & \X & \X &  \X & \X & \X & \C &  \0 & \X & \X & \X &  \X & \X & \A & \X \\[-0.5EM]
\C & \X & \X & \X &  \X & \X & \B & \X &  \X & \A & \X & \X &  \X & \X & \X & \0 \\\hline
\end{array}
$
}
\end{center}

We now assume, seeking a contradiction, that
$B_i 0 \cup B_i^1 1 \cup B_i^2 2 \cup B_i^3 3$, $i=0, \ldots, 15$,
form a partition of $H(5,4)$ into sixteen $1$-perfect codes.
In this case, $B_2^1$, $B_3^1$, $B_{\mathrm{C}}^1$
are mutually disjoint,
$B_2^2$, $B_3^2$, $B_{\mathrm{C}}^2$ are mutually disjoint,
and
$B_2^3$, $B_3^3$, $B_{\mathrm{C}}^3$ are mutually disjoint.
In the picture above, we see that
$B_2^\alpha$ intersects with
$B_3^{\tilde\beta}$
and with $B_3^{\tilde\gamma}$ (examples of intersection points
are marked by color background).
Hence, $\alpha = {\tilde\alpha}$.
Next, we see that
$B_3^{\tilde\alpha}$ intersects with $B_{\mathrm{C}}^{\hat\beta}$
and with $B_{\mathrm{C}}^{\hat\gamma}$.
Hence, $\alpha = \tilde\alpha = {\hat\alpha}$.
But $B_2^\alpha$ intersects with $B_{\mathrm{C}}^{\hat\alpha}$,
which implies $\alpha \ne \hat\alpha$, a contradiction.
As a result, we have a partition of
$H(4,4)$ into $(4,16,3)_4$ codes that cannot be lengthened
to a partition of $H(5,4)$ into $1$-perfect $(5,64,3)_4$ codes.

\begin{theorem}\label{th:4}
 For each $m=3,4, \ldots$, there is a $4$-ary
 $((4^{m}-4)/3, 4^{(4^{m}-4)/3 - m},3)_4$
 code which is not a shortened $1$-perfect code and
 is an element of a partition of the vertex set $H(4^{m}-4)/3,4)_4$ into
 codes with the same parameters.
\end{theorem}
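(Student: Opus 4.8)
The plan is to induct on $m$, using the explicit non-lengthenable partition of $H(4,4)$ constructed just above (Subsection~\ref{s:4444}) as the base case and bootstrapping to larger $m$ through the self-similarity of the construction in Theorem~\ref{th:1}. The crucial observation is that the building blocks $B_i$ feeding the level-$m$ construction live in $H(n'-1,4)$ with $n'-1=(4^{m-1}-4)/3$, which is exactly the ambient space of the codes produced at level $m-1$; moreover the two families share the parameters $(n'-1,4^{\,n'-1-(m-1)},3)_4$, and the number $4^{m-1}$ of required blocks matches the number of parts in the partition delivered by Theorem~\ref{th:1}(iii) at level $m-1$. Thus a non-shortened code at level $m-1$, together with the partition containing it, can be recycled as the family $(B_i)$ at level $m$.

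For the base case $m=3$, I would take the sixteen $(4,16,3)_4$ codes $B_0,\dots,B_{15}$ partitioning $H(4,4)$ that are exhibited in Subsection~\ref{s:4444}. The colored intersection argument given there shows that this partition is \emph{not} lengthenable to a partition of $H(5,4)$ into $1$-perfect $(5,64,3)_4$ codes: the forced chain $\alpha=\tilde\alpha=\hat\alpha$ together with $\alpha\neq\hat\alpha$ yields a contradiction. Feeding these blocks into Theorem~\ref{th:1}(i) produces a $(20,4^{17},3)_4$ code $S_3$; by Theorem~\ref{th:1}(ii) the non-lengthenability of the $(B_i)$ forces $S_3$ to be non-shortened-$1$-perfect, and Theorem~\ref{th:1}(iii) simultaneously places $S_3$ as a part of a partition of $H(20,4)$ into codes of the same parameters. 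This settles $m=3$.

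For the inductive step, I would assume the theorem at level $m-1$: there is a non-shortened-$1$-perfect code $S_{m-1}$ which, by Theorem~\ref{th:1}(iii), is one part of a partition $\mathcal{P}$ of $H((4^{m-1}-4)/3,4)$ into $4^{m-1}$ codes with its parameters. I would use $\mathcal{P}$ as the family $(B_i)$ at level $m$ (parameters and count match, as noted above). The key point is that $\mathcal{P}$ is automatically non-lengthenable: if the $(B_i)$ could be simultaneously lengthened to $1$-perfect codes forming a partition of $H(n',4)$, then in particular the part equal to $S_{m-1}$ would be a shortening of a $1$-perfect code, contradicting the inductive hypothesis. Hence Theorem~\ref{th:1}(ii) shows the resulting $S_m$ cannot be lengthened to a $1$-perfect code, so it is not a shortened $1$-perfect code; and Theorem~\ref{th:1}(iii) exhibits it as a part of a partition of $H((4^m-4)/3,4)$ into codes of the same parameters. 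This closes the induction.

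The genuine difficulty lies entirely in the base case—the explicit non-lengthenable partition of $H(4,4)$ and the verification, through the orthogonal-Latin-square description of the unique $(4,16,3)_4$ code and the intersection pattern of the triples $B_i^1,B_i^2,B_i^3$, that no consistent lengthening of all sixteen blocks exists. The inductive step is by contrast a clean bootstrap once the self-similarity is recognized. The only routine verification needed at each level is that the other ingredient of Theorem~\ref{th:1}, namely the partition $(D_i)$ of the MDS code $M_0$ into $(n'',4^{\,n''-m},3)_4$ codes, exists; this is guaranteed for $q=4$ by Romanov's construction (Lemma~\ref{l:rom}) and the existence of $4$-ary $1$-perfect codes.
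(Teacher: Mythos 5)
Your proposal is correct and follows essentially the same route as the paper: induction on $m$, with the non-lengthenable partition of $H(4,4)$ from Section~\ref{s:4444} fed into Theorem~\ref{th:1} as the base case, and the partition from Theorem~\ref{th:1}(iii) containing the previously built non-shortened code reused as the blocks $(B_i)$ at the next level, its non-lengthenability following from that single part. You merely spell out details the paper leaves implicit (the parameter/count matching and why one non-lengthenable part kills any simultaneous lengthening of the partition), so there is nothing to add.
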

\begin{proof}
 The proof is by induction on $m$.
 For $m=3$, the code is constructed as in
 Theorem~\ref{th:1}, utilizing the partition
 of $H(4,4)$ presented in Section~\ref{s:4444}.
 For $m>4$, the induction step is done
 using Theorem~\ref{th:1} with
 the non-lengthenable partition
 constructed at the previous step.
\end{proof}

\subsection{Other alphabets}\label{s:5555}

The construction above cannot produce non-lengthenable shortened-$1$-perfect-like codes for the ternary alphabet by the following reasons:
\begin{itemize}
\item \textit{there are only two inequivalent partitions of the vertex set of
$H(3,3)$ into $(3,3,3)_3$ codes, and each of them is lengthenable
to a partition of the vertex set of
$H(4,3)$ into $1$-perfect $(4,9,3)_3$ codes}.
\end{itemize}
We refer this fact as a computational result, while it can certainly be obtained by exhaustive search without computers.
The parameters
$(12,3^9,3)_3$ are not too big to handle such codes with computer,
but the number of inequivalent codes is too large and checking equivalence is too slow (from less than second to more than hour) to make an exhaustive search possible. On the other hand, the lengthenability
can be checked effectively: a ternary shortened-$1$-perfect like code $C$
can be lengthened to a $1$-perfect code if and only if the distance-$1$-and-$2$ graph of $C^{(2)}$ is bipartite (indeed, $C'$ and $C''$ are the parts of this graph if and only if the code $C0\cup C'1 \cup C''2$
is $1$-perfect).
So, one can try to find an example of a non-lengthenable code by some randomized approach. However, if such example does not exist in $H(12,3)$, this cannot be established with such an approach.

The approach of Section~\ref{s:4444} might be effective
for the alphabet of size $5$. We seek a partition of $H(5,5)$
into $25$ $(5,5^3,3)_5$ MDS codes that are not lengthenable to a partition of $H(6,5)$ into $25$ perfect codes. The exhaustive search
of all such partitions is impossible,
but some restrictions or theoretical ideas can reduce the amount of computation. Actually, we conjecture that the number of lengthenable partitions is relatively small in comparing with all partitions, for these parameters.

The size $8$ of the alphabet is the first value of $q$ for which
there are inequivalent $(q,q^{q-2},3)_q$ MDS codes,
with parameters of a shortened $1$-perfect code.
Such codes for $q\le 8$ are classified in~\cite{KKO:smallMDS}.
The fact that all of them are lengthenable is not mentioned there,
and we made a separate calculation to reproduce this result.
To do this, we shortened all $1$-perfect
$(9,8^{7},3)_8$ codes (they are listed in~\cite{KKO:smallMDS})
in all possible ways and counted the number of equivalence
classes of the resulting codes.
By curiosity reasons, we repeated this procedure for
shortened $1$-perfect $(8,8^{6},3)_8$ codes as well.
The results are as follows:
\begin{itemize}
\item \textit{All $4$ inequivalent $(8,8^{6},3)_8$ are shortened $1$-perfect}.
\item
\textit{Among the $8$ inequivalent $(7,8^{5},3)_8$ codes,
seven codes are doubly-shortened $1$-perfect,
while the one with $|\mathrm{Aut}(C)| = 86016$ (see~\cite[Table~I]{KKO:smallMDS}) is not}.
\end{itemize}

\section{Concluding remarks}\label{s:end}
In the first part of the paper, we proved
an upper bound on the size of a $\lambda$-fold packing
of $1$-balls in a non-binary Hamming space.

In the second part of the paper, we showed
that among non-linear
$(n=(q^m-q)/(q-1),q^{n-m-1},3)_q$ codes, in the case $q=4$,
for every $m\ge 3$ there are codes that are not shortened
$1$-perfect codes. We conjecture that the same is true for every prime power $q$ larger than $4$.
However, at this moment it is hard
to predict what happens in the case
$q=3$ and
in the case $m=2$, arbitrary $q$
(in the last case the codes are MDS). These two special
subcases of the considered question remain to be intriguing problems.

Partitions of the Hamming space into MDS codes (including the ones considered in Section~\ref{s:4444})
are a special subcase of more general family
of Sudoku-like arrays considered
in~\cite{HMST:2017:sudoku}.
The question which of such partitions
can be lengthened is of independent interest,
while finding non-lengthenable partitions
into $(q,q^{q-2},3)_q$ codes, $q>4$
would result in non-lengthenable
codes with parameters of shortened $1$-perfect
codes by Theorem~\ref{th:1}.


\providecommand\href[2]{#2} \providecommand\url[1]{\href{#1}{#1}}
  \def\DOI#1{{\small {DOI}:
  \href{http://dx.doi.org/#1}{#1}}}\def\DOIURL#1#2{{\small{DOI}:
  \href{http://dx.doi.org/#2}{#1}}}

\end{document}